\let\ofrac=\frac
\let\nfrac=\nicefrac
\newcommand{\qede}{\hfill $\diamond$}
\newcommand{\M}{\mathbf{M}}
\newcommand{\Z}{\mathbb{Z}}
\newcommand{\A}{\mathcal{A}}
\newcommand{\K}{\mathcal{K}}
\newcommand{\kl}{\mathcal{K}\mathcal{L}}
\newcommand{\R}{\mathbb{R}}
\renewcommand{\H}{\mathbb{H}}
\renewcommand{\S}{\mathcal{S}}
\newcommand{\eps}{\varepsilon}
\newcommand{\e}{\mathbf{e}}
\renewcommand{\c}{\mathbf{c}}
\newcommand{\p}{\mathbf{p}}
\newcommand{\x}{\mathbf{x}}
\newcommand{\y}{\mathbf{y}}
\newcommand{\w}{\mathbf{w}}
\newcommand{\z}{\mathbf{z}}
\newcommand{\n}{\mathbf{n}}
\theoremstyle{plain}
\newtheorem{theorem}{Theorem}[section]
\newtheorem{corollary}[theorem]{Corollary}
\newtheorem{lemma}[theorem]{Lemma}
\newtheorem{proposition}[theorem]{Proposition}
\theoremstyle{definition}
\newtheorem{definition}[theorem]{Definition}
\newtheorem{remark}[theorem]{Remark}
\begin{document}
\title[Lyapunov Functions and Non-Convex Douglas--Rachford]{A Lyapunov
  Function Construction for a Non-Convex Douglas--Rachford Iteration}
\author{Ohad Giladi \and Björn S.\ Rüffer}

\address{School of Mathematical and Physical Sciences, University of Newcastle,\linebreak[4] Callaghan, NSW 2308, Australia}
\email{ohad.giladi@newcastle.edu.au, bjorn.ruffer@newcastle.edu.au}
\subjclass[2010]{47H10, 47J25, 37N40, 90C26}

\begin{abstract} While global convergence of the Douglas--Rachford
  iteration is often observed in applications, proving it is still
  limited to convex and a handful of other special cases.  Lyapunov
  functions for difference inclusions provide not only global or local
  convergence certificates, but also imply robust stability, which means
  that the convergence is still guaranteed in the presence of persistent
  disturbances.  In this work, a global Lyapunov function is constructed
  by combining known local Lyapunov functions for simpler, local
  sub-problems via an explicit formula that depends on the problem
  parameters. Specifically, we consider the scenario where one set
  consists of the union of two lines and the other set is a line, so
  that the two sets intersect in two distinct points. Locally, near each
  intersection point, the problem reduces to the intersection of just
  two lines, but globally the geometry is non-convex and the
  Douglas--Rachford operator multi-valued.  Our approach is intended to
  be prototypical for addressing the convergence analysis of the
  Douglas--Rachford iteration in more complex geometries that can be
  approximated by polygonal sets through the combination of local,
  simple Lyapunov functions.
\end{abstract}

\keywords{Douglas--Rachford Iteration; Lyapunov Function;
  Robust $\mathcal{KL}$-Stability; Non-Convex Optimization;
  Global Convergence}

\maketitle

\section{Introduction}\label{sec intro}

The Douglas--Rachford iteration was originally introduced
in~\cite{DR56}, subsequently generalized in \cite{LM79}, and is a well
known method for finding a point in the intersection of two or more
closed sets in a Hilbert space. It has found various applications both
in the case when the involved sets are convex \cite{BCL02,LM79} and in
the case when at least one set is
non-convex~\cite{ABT14,ERT07,GE08}. The convergence of the algorithm
in the latter case is still not fully understood to date.

When both sets are convex, it is known that the Douglas--Rachford
operator is firmly non-expansive (e.g., via \cite[Theorem 12.2]{GK90}
and repeated application of \cite[Theorem 12.1]{GK90}) and, if the
operator has a fixed point, the algorithm converges if the ambient
space is finite dimensional and converges weakly if the space is
infinite dimensional~\cite{Opi67}. Despite its use in applications,
much less is substantiated theoretically about the convergence
behavior of the algorithm if the sets are non-convex. Specific cases
where convergence proofs have been obtained include the case where the
first set is the unit Euclidean sphere and the second set is a
line~\cite{BS11,AB13,Ben15}.  In~\cite{BS11}, the authors show local
convergence to the intersection points, that in some special cases one
can obtain global convergence, and that in the non-feasible case the
Douglas--Rachford iteration is divergent. A stronger result with a
larger, explicit domain of convergence was proved
in~\cite{AB13}. In~\cite{Ben15}, the author gives an explicit
construction of a Lyapunov function for the Douglas--Rachford
iteration in the case of the sphere and a line. This in turn implies
global convergence for all points which are not in the subspace of
symmetry. In fact, the construction in~\cite{Ben15} can be used to
prove a type of convergence which is stronger than norm
convergence~\cite{Gil16}, cf.\ Section~\ref{sec:role-lyap-funct}.  In
\cite{ABT16} the global behavior of the Douglas--Rachford iteration
for the intersection of a half-space with a possibly non-convex set is
analyzed, which has applications in combinatorial optimization
problems. Global convergence in the case that one of the involved sets
is finite is shown in~\cite{MR3628318}.  The authors of~\cite{BLSSS16}
study ellipses and $p$-spheres intersecting a line, while proving
local convergence and employing computer-assisted graphical methods
for further analysis. The authors of \cite{DT17} employ the
Douglas--Rachford iteration for finding a zero of a function and
generalize the Lyapunov function construction of~\cite{Ben15} using
concepts from nonsmooth analysis to this case. In~\cite{HL13}, the
authors show local linear convergence of non-convex instances of the
Douglas--Rachford iteration based on the notion of local subfirm
nonexpansiveness and coercivity conditions. A generalization to
superregular (possibly non-convex) sets is given in~\cite{MR3438115},
as well as in the forthcoming
works~\cite{dao16:_linear_conver_projec_algor,dao17:_dougl_rachf}.
In~\cite{LP16} the Douglas--Rachford method is employed to minimize
the sum of a proper closed function and a smooth function with a
Lipschitz continuous gradient, showing convergence when the functions
are semi-algebraic and the step-size is sufficiently small.
In~\cite{MR3227480} local convergence is shown in the case of
non-convex sets that are finite unions of convex sets, while it is
also demonstrated that convergence may fail in the case of more
general sets.  Difference inclusions more general than the
Douglas--Rachford iteration along with their convergence behavior have
also been studied in the systems theory literature~\cite{KT05}.

In this paper we prove global convergence of a Douglas--Rachford
iteration (in fact, we even prove global robust $\kl$-stability) for
yet another specific case of a non-convex set, consisting of two
non-parallel lines, and a second set, which is also a line, so that
the first and the second set intersect in exactly two points, cf.\
Fig.~\ref{figThreeLines}.
\begin{figure}[htb!]  \centering \includegraphics{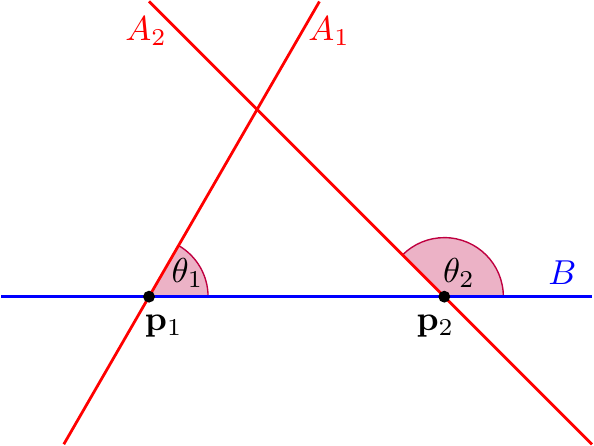}
  \caption{The geometry studied in the paper: One non-convex set,
    consisting of two lines (red), and another convex set, consisting of
    one line (blue), with two unique intersection points $\p_{1}$ and
    $\p_{2}$.}
  \label{figThreeLines}
\end{figure} This scenario is intended to be prototypical for the
study of the intersection of polygonal sets, which could be
approximations of norm-spheres or ellipses. We remark that local
convergence for the scenario is already proven
in~\cite{MR3227480}. And\label{dt17-discussion} while~\cite{DT17} may
seem to cover the current scenario as a special case, it does in fact
not, as the non-convex set $A$ in this scenario cannot be represented
as the graph of a function defined on the set $B$, and because for a
global Lyapunov function construction the two isolated attractors
$\p_{1}$ and $\p_{2}$ are in conflict with a convexity assumption on
the energy function in~\cite{DT17} (both $\p_{i}$ would have to be
local minima).

Our main contribution is the construction of a global Lyapunov
function for robust $\kl$-stability in Theorem~\ref{main thm}. Unlike
previous contributions to the Douglas--Rachford convergence analysis
based on Lyapunov functions, our construction follows the
divide-and-conquer paradigm of re-using known, \emph{local} Lyapunov
functions in our construction of a \emph{global} Lyapunov function.
To this end we use the global Lyapunov functions for the
Douglas--Rachford iterations corresponding to the intersection problem
between two lines.  These are essentially the distance to the
intersection point and, in the problem depicted in
Fig.~\ref{figThreeLines}, they are local Lyapunov functions near each
intersection point for the non-convex Douglas--Rachford difference
inclusion.  The novelty in our contribution is that for a range of
problem parameters we can then combine these local Lyapunov functions
into a global Lyapunov function for the Douglas--Rachford iteration
for the non-convex problem.  We can thus deduce that the
Douglas--Rachford iteration converges in a non-convex scenario
provided certain conditions on the geometry are met. At the same time,
we know from extensive numerical experiments that Douglas--Rachford
iterations are not guaranteed to converge to a feasible point for all
problem parameters. One of the advantages of using Lyapunov functions
is the fact that in many cases the existence and properties of a
Lyapunov function imply types of convergence which are stronger than
point-wise norm convergence. In
Corollary~\ref{cor:robust-kl-convergence-rate} explicit error bounds
for the solutions of perturbed versions of the Douglas--Rachford
iteration are given for certain choices of angles $\theta_1$,
$\theta_2$. This means that even if we allow small perturbations of
the elements of the Douglas--Rachford iterates, we still obtain
\emph{uniform} convergence on bounded sets.

This paper is organized as follows.  Notation is introduced in
Section~\ref{sec:notation}. In Section~\ref{sec:role-lyap-funct} we
review known concepts and results from~\cite{KT05} on robust stability
with respect to two measures for difference inclusions. This includes
the definition of a Lyapunov function as it is commonly used in the
modern systems and control literature. The definition of the
Douglas--Rachford iteration is recalled in
Section~\ref{sec:dougl-rachf-algor}.  In
Section~\ref{sec:comb-local-lyap} we discuss the Douglas--Rachford
iteration for two sets $A$ and $B$ with $A$ consisting of two
non-parallel lines and $B$ another line, so that the intersection of
$A$ and $B$ consists of two points. A technical proof in this section
has been postponed to Appendix~\ref{app:proof-prop}. In
Section~\ref{sec:persp-open-probl} several open problems for further
research are formulated, while Section~\ref{sec:conclusions} concludes
the paper.

Sagemath~\cite{sagemath} code in the form of a jupyter notebook is
available at~\cite{GR17} for the interested reader to experiment
with. This code implements the geometry described above as well as the
associated Douglas--Rachford operator.

\section{Notation}
\label{sec:notation}

Denote $\R_+ \coloneqq [0,\infty[$ and $\Z_+ \coloneqq
\{0,1,2,\dots\}$. By bold letters, like $\e$, $\x$, and $\p$, we
denote vectors in the Hilbert space $\H$, and for most of this paper
$\H$ is either $\R^{d}$ or $\R^{2}$.  Let $B(\x,\rho)$ ($B[\x,\rho]$)
denote the open (closed) ball in $\R^{d}$ centered at $\x$ with radius
$\rho$, with respect to the Euclidean norm.  Let $\e_1,\e_2$ denote
the standard basis vectors in $\R^2$, $\e_1 \coloneqq (1,0)$, $\e_2
\coloneqq (0,1)$.  By $\M_{\theta}$ we denote the rotation
matrix $$\begin{bmatrix} \cos\theta & \sin \theta \\ -\sin\theta~&
  \cos\theta
\end{bmatrix}$$ acting on $\R^{2}$.  A function $\beta\colon
\R_+\times \R_+\to \R_+$ is said to be of class $\kl$ if for every
$t>0$, $\beta(\cdot,t)$ is continuous, strictly increasing, and
$\beta(0,t) = 0$, and also for every $s\in \R_+$, $\beta(s,\cdot)$ is
decreasing, and satisfies $\beta(s,t) \stackrel{t\to
  \infty}{\longrightarrow} 0$.  A function $\varphi\colon \R_+\to \R_+$
is said to be of class $\K_\infty$ if it is continuous, strictly
increasing, unbounded, and satisfies $\varphi(0) = 0$.

\section{The Role of Lyapunov Functions in Robust Stability}
\label{sec:role-lyap-funct}

In this section we take a detour to recall some known definitions and
results from the theory of discrete time dynamical systems and
difference inclusions. For more information on the subject the
interested reader is referred to~\cite{KT05}.  This will serve as a
basis for the following section where we will demonstrate that certain
instances of the Douglas--Rachford iteration are robustly
$\kl$-stable.

Let $U\subset \R^{d}$, $T\colon U \rightrightarrows U$ be a
multi-valued map, and consider the difference inclusion
\begin{align}\label{diff incl} \x_{n+1} \in T\x_n, \quad n\in \Z_+.
\end{align} A \emph{solution} to the initial value problem given by
the difference inclusion~\eqref{diff incl} with initial condition
$\x_0\in U$, which we denote by $$\phi(\x_{0},\cdot)\colon \Z_+ \to
\R^d,$$ is a function that satisfies $\phi(\x_{0},0) = \x_{0}$ and
$$\phi(\x_{0},n+1) \in T\big(\phi(\x_{0},n)\big)$$ for all
$n\in\Z_{+}$. Note that for difference inclusions there may well be
more than one solution for the same initial value problem.  The set of
all solutions to~\eqref{diff incl} is denoted by $\S(\x_{0},T)$. We
will also commonly speak of solutions of the difference
inclusion~\eqref{diff incl} and really mean solutions to a
corresponding initial value problem that will be clear from the
context. A \emph{periodic solution}
$\phi(\x_{0},\cdot)\colon\Z_{+}\to\R^{d}$ is a solution of~\eqref{diff
  incl} that is periodic in $n$, i.e., there exists a $K\in\Z_{+}$,
$K>1$, such that $\phi(\x_{0},n+K)=\phi(\x_{0},n)$ for all
$n\in\Z_{+}$.  A \emph{periodic orbit} is the image of a periodic
solution in $\R^{d}$, i.e., the set $\{\phi(\x_{0},n)\colon
n\in\Z_{+}\}$.  We define several stability properties for the
difference inclusion~\eqref{diff incl}.
\begin{definition}[$\kl$-stability] Let $\omega_1,\omega_2\colon
  \R^d\to \R_+$ be continuous functions. The difference
  inclusion~\eqref{diff incl} is said to be \emph{$\kl$-stable with
    respect to $(\omega_1,\omega_2)$} iff there exists $\beta \in \kl$
  such that for every $\x\in \R^d$, every $\phi\in \S(\x,T)$ and every
  $n\in \Z_+$,
  \begin{align*} \omega_1(\phi(\x,n)) \le \beta(\omega_2(\x),n).
  \end{align*}
\end{definition} For example, if $\omega_{1}=\omega_{2}$ is just the
distance to some set of interest, then $\kl$-stability says that
solutions for any initial conditions will converge to this set with a
uniform rate of convergence (which is encoded in $\beta$).  To define
the stronger notion of \emph{robust $\kl$-stability} we need to
introduce a few additional concepts.  Let $\sigma\colon \R^d\to \R_+$
and for a set $K\subseteq \R^d$ define its dilation with respect to
$\sigma$ by
\begin{align*} K_\sigma \coloneqq \bigcup_{\x\in K}B[\x,\sigma(\x)].
\end{align*} Given a map $T\colon \R^d\rightrightarrows \R^d$, define
the $\sigma$-perturbation of $T$ by
\begin{align*} T_\sigma \x \coloneqq \bigcup_{\y \in
  T(B[\x,\sigma(\x)])}B[\y,\sigma(\y)],
\end{align*} and let $\S_\sigma(\x_{0},T) \coloneqq
\S(\x_{0},T_\sigma)$ be the collection of all solutions to the
perturbed difference inclusion $\x_{n+1} \in T_\sigma \x_n$ with
initial condition $\x_0$. Notice that if $\sigma \equiv 0$, the
constant zero function, then $T_\sigma =T$.  Finally, given a
continuous function $\omega_1 \colon \R^d \to \R_+$, define the two
sets
\begin{align} \A_\sigma \coloneqq \A_\sigma(T,\omega_{1}) & \coloneqq
                                                            \Big\{\x \in \R^d\colon \sup_{n\in \Z_+}\sup_{\phi\in \S_\sigma(\x,
                                                            T)}\omega_1(\phi(\x,n)) = 0\Big\}\nonumber\\ \intertext{and}
  \label{def A} \A \coloneqq \A(T,\omega_{1}) & \coloneqq \Big\{ \x\in
                                                \R^d\colon \sup_{n\in \Z_+}\sup_{\phi\in \S(\x,
                                                T)}\omega_1(\phi(\x,n)) = 0\Big\}.
\end{align}
\begin{definition}[Robust $\kl$-stability]\label{def rob kl} Let
  $\omega_1,\omega_2 \colon \R^d \to \R_+$ be continuous. The difference
  inclusion~\eqref{diff incl} is said to be \emph{robustly $\kl$-stable
    with respect to $(\omega_1,\omega_2)$} iff there exists a continuous
  function $\sigma\colon \R^d\to \R_+$ such that
  \begin{enumerate}
  \item for all $\x\in \R^d\setminus \A$, $\sigma(\x)>0$;
  \item $\A_\sigma = \A$;
  \item the difference inclusion $\x_{n+1} \in T_\sigma \x_n$ is
    $\kl$-stable with respect to $(\omega_1,\omega_2)$.
  \end{enumerate}
\end{definition}
\begin{definition}[Lyapunov function]\label{def lyap} Let
  $\omega_1,\omega_2\colon \R^d \to \R_+$ be two continuous functions. A
  function $V\colon \R^d\to \R_+$ is said to be a \emph{Lyapunov
    function with respect to $(\omega_1,\omega_2)$} for the difference
  inclusion~\eqref{diff incl} iff there exist $\varphi_1,\varphi_2 \in
  \K_\infty$ and $\gamma \in [0,1)$ such that for all $\x\in \R^d$,
  \begin{eqnarray}
    \label{cond omegas} & \varphi_1(\omega_1(\x)) \le V(\x) \le
                          \varphi_2(\omega_2(\x));& \\
    \label{cond decay} & \sup_{\y \in T\x}V(\y) \le \gamma V(\x);& \\
    \label{cond invar set} & V(\x) = 0 \iff \x \in \A, &
  \end{eqnarray} where $\A$ is defined as in~\eqref{def A}.
\end{definition} There is an intimate connection between the stability
properties of the difference inclusion~\eqref{diff incl} and the
existence and properties of associated Lyapunov functions. In
particular, the following is known.
\begin{theorem}[Theorem 2.8 in~\cite{KT05}]\label{thm KT} Assume that
  $T\colon \R^d\rightrightarrows \R^d$ is such that $T\x$ is compact for
  all $\x\in \R^d$. Suppose also that there exists a continuous Lyapunov
  function on $\R^d$ with respect to two continuous functions
  $\omega_1$, $\omega_2$. Then the difference inclusion~\eqref{diff
    incl} is robustly $\kl$-stable with respect to $(\omega_1,\omega_2)$.
\end{theorem}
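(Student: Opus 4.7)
The plan is to derive plain $\kl$-stability from iterating the decay inequality \eqref{cond decay}, and then to construct a continuous perturbation $\sigma$ under which the same Lyapunov estimate survives with a slightly worse contraction constant $\tilde\gamma\in(\gamma,1)$.

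First, along any solution $\phi(\x,\cdot)\in\S(\x,T)$, iteration of \eqref{cond decay} gives $V(\phi(\x,n))\le\gamma^{n}V(\x)$. Combining with the sandwich bound~\eqref{cond omegas} yields
\begin{align*}
\omega_1(\phi(\x,n))\le\varphi_1^{-1}\!\bigl(\gamma^{n}\varphi_2(\omega_2(\x))\bigr),
\end{align*}
and since $\varphi_1^{-1},\varphi_2\in\K_\infty$ and $\gamma\in[0,1)$, the right-hand side is a $\kl$-function of $(\omega_2(\x),n)$. This proves $\kl$-stability of the unperturbed inclusion~\eqref{diff incl}, and by~\eqref{cond invar set} the set $\A$ is exactly the zero-level set of $V$.

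Next, fix $\tilde\gamma\in(\gamma,1)$. Continuity of $V$ together with compactness of each image $T\x$ and upper hemicontinuity of $\x\mapsto\sup_{\y\in T\x}V(\y)$ (inherited from \eqref{cond decay}) allows one to show that for every $\x_{0}\in\R^{d}$ there is a $\delta(\x_{0})>0$ such that whenever $\x\in B[\x_{0},\delta(\x_{0})]$, $\y\in T(B[\x,\delta(\x_{0})])$ and $\y'\in B[\y,\delta(\x_{0})]$, one has $V(\y')\le\tilde\gamma\, V(\x_{0})$. From the family $\{\delta(\x_{0})\}_{\x_{0}}$ I would construct a globally continuous $\sigma\colon\R^{d}\to\R_{+}$ by a Lipschitz regularisation such as $\sigma(\x):=\inf_{\x_{0}\in\R^{d}\setminus\A}\bigl(\tfrac12\delta(\x_{0})+\|\x-\x_{0}\|\bigr)$, arranged so that $\sigma(\x)>0$ off $\A$ and $\sigma$ vanishes sufficiently quickly as $\x\to\A$. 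This global $\sigma$ will imply the perturbed Lyapunov inequality $\sup_{\y\in T_\sigma\x}V(\y)\le\tilde\gamma\, V(\x)$, and then iterating exactly as in the first step produces $\kl$-stability of the perturbed inclusion $\x_{n+1}\in T_\sigma\x_{n}$, i.e.\ item~(3) of Definition~\ref{def rob kl}. Item~(1) is immediate from the construction; item~(2), $\A_\sigma=\A$, follows by combining $V(\x)=0\iff\x\in\A$ with the perturbed decay and $\kl$-bound applied to $\omega_1$.

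The main obstacle is the second step: patching the local $\delta$-estimates into a single continuous function $\sigma$ that is simultaneously (i) strictly positive on $\R^{d}\setminus\A$, (ii) small enough near $\A$ that the dilated images $T_\sigma\x$ for $\x\in\A$ do not escape $\A$, and (iii) large enough elsewhere to make the perturbation estimate genuinely robust. This is where the continuity of $V$, $\omega_{1}$, $\omega_{2}$ and the compactness of $T\x$ are used most essentially; the strict gap $\tilde\gamma>\gamma$ provides precisely the slack that permits a nontrivial continuous $\sigma$ without destroying the Lyapunov property.
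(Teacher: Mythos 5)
First, a point of reference: the paper does not prove this statement at all --- it is imported verbatim as Theorem~2.8 of~\cite{KT05} --- so there is no in-paper argument to compare yours against; the closest analogue is the concrete computation in Appendix~\ref{app:proof-cor}. Your first step is correct and standard: iterating~\eqref{cond decay} and sandwiching via~\eqref{cond omegas} gives $\omega_1(\phi(\x,n))\le\varphi_1^{-1}\big(\gamma^n\varphi_2(\omega_2(\x))\big)$, which is a $\kl$ bound, and~\eqref{cond invar set} identifies $\A$ with the zero set of $V$. That is the easy half.

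The genuine gap is in your second step, and it is structural rather than cosmetic. The perturbed operator is $T_\sigma\x=\bigcup_{\y\in T(B[\x,\sigma(\x)])}B[\y,\sigma(\y)]$: the outer dilation radius is $\sigma(\y)$, evaluated at the image point $\y$, not a quantity you control from the base point. Your local estimate asks for a single $\delta(\x_0)$ such that $V(\y')\le\tilde\gamma V(\x_0)$ for all $\y\in T(B[\x_0,\delta(\x_0)])$ and $\y'\in B[\y,\delta(\x_0)]$, but (i) the set $T(B[\x_0,\delta])$ need not be bounded --- compactness of each individual $T\x$ does not give local boundedness of $T$, and sublevel sets of $V$ need not be bounded since $\omega_1$ is not assumed proper --- so mere continuity of $V$ does not deliver the uniform-over-$\y$ control you invoke; and (ii) even if it did, the radius that actually appears in $T_\sigma$ is $\sigma(\y)$, which your infimum formula does not tie back to $\delta(\x_0)$. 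Relatedly, the assertion that $\x\mapsto\sup_{\y\in T\x}V(\y)$ is upper hemicontinuous ``inherited from~\eqref{cond decay}'' is unjustified: a pointwise upper bound by the continuous function $\gamma V$ is not upper semicontinuity (fortunately the pointwise bound is all that is needed). The standard repair is to decouple the two dilations: impose on $\sigma$ the single pointwise requirement $\sup_{\z\in B[\w,\sigma(\w)]}V(\z)\le(1+\eps)V(\w)$ for every $\w\in\R^d$, with $(1+\eps)^2\gamma<1$, and then chain
$\sup_{\z\in T_\sigma\x}V(\z)\le(1+\eps)\sup_{\y\in T(B[\x,\sigma(\x)])}V(\y)\le(1+\eps)\gamma\sup_{\w\in B[\x,\sigma(\x)]}V(\w)\le(1+\eps)^2\gamma V(\x)$,
using only the pointwise decay~\eqref{cond decay} in the middle inequality. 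This is exactly the scheme of Lemma~\ref{prop sigma} and the proof of Corollary~\ref{cor:robust-kl-convergence-rate}, where the explicit form of $V$ makes $\sigma$ computable. In the abstract setting one must still construct such a continuous $\sigma$, positive off $\A$, as a minorant of the (positive, lower semicontinuous) maximal admissible radius, and verify $\A_\sigma=\A$ (which does follow, since $V=0$ on $\A$ forces $T\x\subseteq\A$ and one may take $\sigma=0$ there). These are precisely the points your sketch defers, so as written the proof is incomplete.
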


\section{The Douglas--Rachford Iteration}
\label{sec:dougl-rachf-algor}

For two sets the algorithm can be described as follows. Given a
non-empty set $A$ in a Hilbert space $(\H,\|\cdot\|)$ and a point
$\x\in \H$ denote $d(\x,A) \coloneqq \inf_{\z \in A}\|\x-\z\|$. The
projection operator $P_A\colon \H \rightrightarrows \H$ is given by
\begin{equation*} P_A \x \coloneqq \left\{\y \in \H\colon \|\x-\y\| =
    d(\x,A)\right\}
\end{equation*} and in general it can be multi-valued, but is
single-valued if $A$ is non-empty, closed, and convex. Given two
closed, non-empty sets $A,B \subseteq \H$, define the
Douglas--Rachford operator $T_{A,B}\colon \H\rightrightarrows \H$ by
\begin{equation*} T_{A,B} \coloneqq \frac{I+R_BR_A}{2},
\end{equation*} where $I\colon \H\to \H$ is the identity operator,
and, given a set $A\subseteq \H$, $R_A$ is the reflection operator
given by $R_A \coloneqq 2P_A - I$. The case where $A\cap B \neq
\emptyset$ is known as the feasible case. In this paper we will only
discuss the feasible case. Specifically, we consider the convergence
behavior of the difference inclusion $\x_{n+1} \in T_{A,B}\x_n$, with
$n\in\Z_{+}$, and $\x_{0}\in \H$, which is known as a
Douglas--Rachford iteration of $\x_{0}$.

\section{Combining Local Lyapunov Functions to Global Ones}
\label{sec:comb-local-lyap}

Finding Lyapunov functions is in general the hard part of Lyapunov
stability analysis. A divide-and-conquer inspired approach is to try
and find Lyapunov functions for simple sub-problems and then combine
these into a Lyapunov function for the general case of interest.

In this section we demonstrate that this can be done in a prototypical
non-convex scenario for the Douglas--Rachford iteration. In this
scenario it is very easy to formulate global Lyapunov functions for
the sub-problems (intersections of two lines). In the general version
of the problem these functions become then local Lyapunov functions,
i.e., they satisfy the descent condition~\eqref{cond decay} only
locally, that is, sufficiently close to a fixed point of the
difference inclusion~\eqref{diff incl}. The challenge is then to
combine these local Lyapunov functions to one global Lyapunov
function, which we demonstrate in Theorem~\ref{main thm}.

There are several motivations to consider the very simple geometry in
this section. Firstly, the case considered here is possibly the
simplest non-convex geometry where the Douglas--Rachford iteration
converges globally to feasible points for a range of problem
parameters.  Secondly, it is the first case known to the authors where
a global Lyapunov function for the Douglas--Rachford iteration has
been constructed from simpler, known local Lyapunov
functions. Thirdly, via approximation of circles, ellipses or function
graphs through polygons it is not unreasonable to expect that a
refined and possibly more localized version of our method could also
provide (alternative) Lyapunov function constructions for more
involved non-convex geometries like circle and line
(cf.~\cite{Ben15}), ellipse and line (cf.~\cite{BLSSS16}), or general
function graphs and line (cf.~\cite{DT17}). This could open the door
to novel sufficient conditions for the convergence of
Douglas--Rachford iterations in non-convex scenarios.

\subsection{Douglas--Rachford Iteration for Two Intersecting Lines}
\label{sec:dougl-rachf-iter-1+1}

A case which is elementary and well understood is the case of two
straight lines in $\R^2$, cf.\ Fig.~\ref{figTwoLines}. For a more
general treatment of the intersection of two subspaces we refer the
reader to \cite{MR3233066}.
\begin{figure}[htb!]  \centering \includegraphics{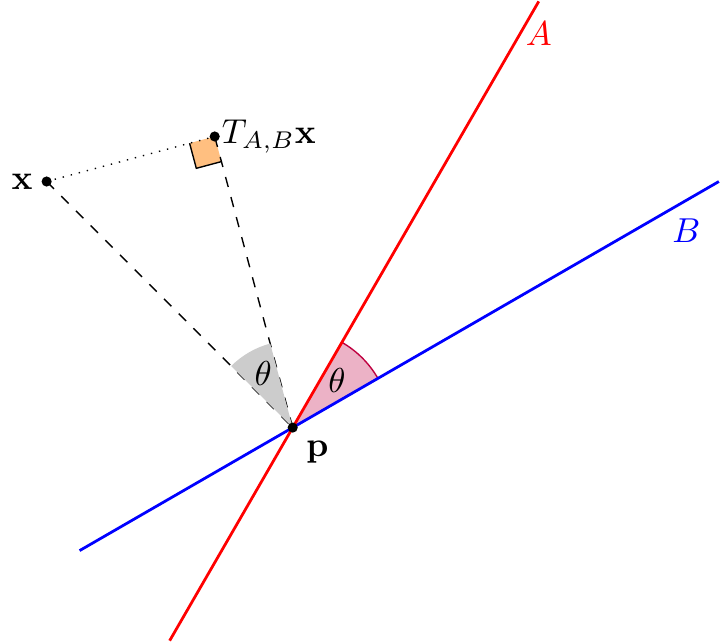}
  \caption{A Douglas--Rachford step for the case of two lines in the
    plane. Notice that the triangle $\triangle (T_{A,B}\x /\p / \x)$ is a
    right triangle.}
  \label{figTwoLines}
\end{figure} The qualitative behavior for this scenario, here
presented from a Lyapunov function perspective, is summarized as
follows.

\begin{proposition}\label{prop decay V's} Suppose that $A$ and $B$ are
  two non-parallel straight lines in $\R^2$ which intersect at a point
  $\p$, cf.\ Fig.~\ref{figTwoLines}. For simplicity we assume $B$ is the
  $x$-axis. Assume also that the angle from $B$ to $A$ is $\theta \in
  ]0,\pi[$.  Then the Douglas--Rachford operator $T_{A,B}$ is
  single-valued, affine, and is given by
  \begin{align} T_{A,B}\,\x = \p + \cos\theta \M_{\theta} (\x-\p),
    \label{eq:4}
  \end{align} for all $\x\in\R^{2}$.  Moreover, the function $V\colon
  \R^2\to [0,\infty)$ given by
  \begin{align} V(\x) \coloneqq \|\x-\p\|^2,
    \label{eq:3}
  \end{align} satisfies \eqref{cond omegas} with
  $\varphi_{1}(r)=\varphi_{2}(r)=r^{2}$ and
  $\omega_{1}(\x)=\omega_{2}(\x)=\|\x-\p\|$,
  \begin{align} V(T_{A,B}\x) =( \cos^2\theta) V(\x) < V(\x)\quad
    \text{whenever}\quad \x\ne \p\,,
    \label{eq:2}
  \end{align} as well as $V(\x)=0$ if and only if $\x=\p$.  That is,
  $V$ is a global Lyapunov function for the Douglas--Rachford iteration,
  and hence the latter is robustly $\mathcal{KL}$-stable.
\end{proposition}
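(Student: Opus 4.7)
The plan is to reduce everything to an explicit matrix computation after translating $\p$ to the origin, observing that reflections across two lines through a common point compose to a rotation by twice the angle between them.

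First I would pass to the translated coordinates $\y \coloneqq \x - \p$; since affine maps commute with translation in the obvious way, it suffices to show that after setting $\p = 0$ the Douglas--Rachford operator is the linear map $\cos\theta\,\M_{\theta}$. With $B$ the $x$-axis and $A$ the line through $0$ at angle $\theta$, both projections are single-valued, so $T_{A,B}$ is single-valued. The reflection $R_{B}$ is $\mathrm{diag}(1,-1)$, and $R_{A}$ is the reflection matrix $\bigl[\begin{smallmatrix}\cos 2\theta & \sin 2\theta\\ \sin 2\theta & -\cos 2\theta\end{smallmatrix}\bigr]$; their product $R_{B}R_{A}$ computes directly to the rotation matrix $\M_{2\theta}$. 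Averaging with the identity and applying the double-angle identities $1+\cos 2\theta = 2\cos^{2}\theta$ and $\sin 2\theta = 2\sin\theta\cos\theta$ turns $(I+\M_{2\theta})/2$ into $\cos\theta\,\M_{\theta}$, proving formula \eqref{eq:4}.

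For the Lyapunov claims I would just compute: since $\M_{\theta}$ is orthogonal,
\begin{equation*}
V(T_{A,B}\x) = \|\cos\theta\,\M_{\theta}(\x-\p)\|^{2} = \cos^{2}\theta\,\|\x-\p\|^{2} = \cos^{2}\theta\,V(\x),
\end{equation*}
and because $\theta\in\,]0,\pi[$ we have $\gamma \coloneqq \cos^{2}\theta \in [0,1)$, yielding the strict inequality \eqref{eq:2} whenever $\x\ne\p$. The bounds \eqref{cond omegas} follow trivially with $\varphi_{1}(r)=\varphi_{2}(r)=r^{2}$ (which belong to $\mathcal{K}_{\infty}$) and $\omega_{1}(\x)=\omega_{2}(\x)=\|\x-\p\|$ (continuous on $\R^{2}$), while $V(\x)=0\iff\x=\p$ is immediate from the definition of $V$.

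Finally, to identify $\A$ with $\{\p\}$ and invoke Theorem~\ref{thm KT}, I would note that iterating \eqref{eq:2} gives $V(\phi(\x,n)) = \cos^{2n}\theta\,V(\x)\to 0$ for the unique solution $\phi(\x,\cdot)\in\S(\x,T_{A,B})$, so $\A(T_{A,B},\omega_{1}) = \{\p\}$, consistent with \eqref{cond invar set}. Since $T_{A,B}\x$ is a singleton (hence compact) for every $\x$ and $V$ is continuous, Theorem~\ref{thm KT} applies and yields robust $\kl$-stability with respect to $(\omega_{1},\omega_{2})$.

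There is no real obstacle here; the only care needed is bookkeeping the sign/orientation convention used in the definition of $\M_{\theta}$ so that the composition of reflections indeed matches $\M_{2\theta}$ rather than $\M_{-2\theta}$, and checking that $\theta\in\,]0,\pi[$ (excluding the parallel case) is exactly what is needed to make $\cos^{2}\theta<1$ so that the contraction factor $\gamma$ is admissible.
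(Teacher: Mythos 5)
Your proof is correct, but it is genuinely more self-contained than the one in the paper: the paper disposes of this proposition by citing the general treatment of the Douglas--Rachford operator for two intersecting (affine) subspaces in \cite[Theorem~4.1 and Section~5]{MR3233066}, translating by $\p$, and then invoking Theorem~\ref{thm KT}, whereas you derive formula~\eqref{eq:4} from scratch by composing the explicit reflection matrices. Your matrix computation checks out against the paper's (clockwise) convention $\M_{\theta}=\bigl[\begin{smallmatrix}\cos\theta & \sin\theta\\ -\sin\theta & \cos\theta\end{smallmatrix}\bigr]$: indeed $R_{B}R_{A}=\mathrm{diag}(1,-1)\cdot\bigl[\begin{smallmatrix}\cos 2\theta & \sin 2\theta\\ \sin 2\theta & -\cos 2\theta\end{smallmatrix}\bigr]=\M_{2\theta}$ and $(I+\M_{2\theta})/2=\cos\theta\,\M_{\theta}$ by the double-angle identities, and the order of reflections ($R_{A}$ first) matches the definition $T_{A,B}=(I+R_{B}R_{A})/2$. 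The remainder (orthogonality of $\M_{\theta}$, the sandwich bounds, compactness of the singleton $T_{A,B}\x$, and the appeal to Theorem~\ref{thm KT}) is identical in spirit to what the paper does. What the citation-based route buys is generality -- \cite{MR3233066} covers subspaces of arbitrary dimension, which the paper flags as a natural extension -- while your route buys a reader-verifiable two-line computation with no external dependency. One tiny imprecision: the identification $\A=\{\p\}$ follows not from $V(\phi(\x,n))\to 0$ but from the fact that $\sup_{n}\omega_{1}(\phi(\x,n))=0$ already forces $\omega_{1}(\x)=\|\x-\p\|=0$ at $n=0$, together with $\p$ being a fixed point; the conclusion is the same and the point is trivial, but the stated reason is not quite the right one.
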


\begin{proof} The result follows from the more general case
  in~\cite[Theorem~4.1 and Section~5]{MR3233066}, up to translation by
  $\p$, as well as an application of Theorem~\ref{thm KT}.
\end{proof} The existence of a global Lyapunov function guarantees
that the Douglas--Rachford iteration converges to a fixed point for
any initial condition.

Notice that we chose $V$ to be the \emph{squared} distance to $\p$. In
the control theory literature the choice of a quadratic Lyapunov
function is a de facto standard for two reasons. One reason is the
chain rule that simplifies the computation of
$\frac{d}{dt}V(x(t))=\nabla V(x) f(x)$ in continuous-time dynamics
$\dot x=f(x)$ at a point $x=x(t)$ without a requirement to compute
solutions to a differential equation. The other reason is the added
smoothness (compared to simply using the distance) at the reference
point, which is beneficial in robustness analysis (especially in
continuous-time systems, cf.~\cite{Kel15}) and control design (see,
e.g., \cite{sontag1998-mathematical-control-theory}).

\subsection{Douglas--Rachford Iteration for Two Lines Intersecting
  with a Third Line}
\label{sec:dougl-rachf-iter-2+1}

Now we assume that $A_1$, $A_2$, are two non-parallel straight lines
that each form a positive angle with the positive $x$-axis, and let
$A$ be given by $A \coloneqq A_1 \cup A_2$.  We assume that $B$ is the
$x$-axis, and that we have $A_1\cap B \eqqcolon \{\p_1\}$, $A_2\cap B
\eqqcolon \{\p_2\}$.

\label{single-point-intersection} The case when $\p_{1}=\p_{2}$, i.e.,
all three lines intersect in a single point, is not very different
from the discussion in the previous subsection. In fact, it can be
shown that the (squared) distance to the common intersection point is
a global Lyapunov function for the Douglas--Rachford iteration.

Here we concentrate on the more interesting case when
$\p_{1}\ne\p_{2}$. Without loss of generality we may assume $\p_1 =
-\nfrac{1}{2}\,\e_1$, $\p_2 = \nfrac{1}{2}\, \e_1$.  Denote by
$\theta_1,\theta_2$, the angles of $A_1,A_2$, respectively, with the
positive $x$-axis, as denoted in Fig.~\ref{figThreeLines}.  We assume
from here onwards that $0 < \theta_1 \le \pi/2$ and $\theta_1 <
\theta_2 < \pi$ (we exclude the cases $\theta_1 = 0$, $\theta_2 = 0$,
and $\theta_1 = \theta_2$ since in this case we have parallel lines,
or lines that coincide).  The following three sets in $\R^2$ are of
further interest,
\begin{align}\label{def D's} \nonumber D_1 & \coloneqq \big\{\x\in
                                             \R^2\colon d(\x,A_1) < d(\x,A_2)\big\}, \\ D_2 & \coloneqq \big\{\x\in
                                                                                              \R^2\colon d(\x,A_1) > d(\x,A_2)\big\}, \text{ and} \\ \nonumber D_3 &
                                                                                                                                                                     \coloneqq \big\{\x\in \R^2\colon d(\x,A_1) = d(\x,A_2)\big\},
\end{align} see Fig.~\ref{figDomains}.  It is these sets that
determine whether $T_{A,B}$ is multi-valued or singleton-valued, i.e.,
\begin{align}\label{def T} T_{A,B}\,\x = \begin{cases} T_{A_1,B}\,\x &
    \text{when }\x \in D_1, \\ T_{A_2,B}\,\x & \text{when }\x \in D_2, \\
    \big\{T_{A_1,B}\,\x,T_{A_2,B}\,\x\big\} & \text{when }\x\in
    D_3.\end{cases}
\end{align}
\begin{figure}[htb!]  \centering \includegraphics{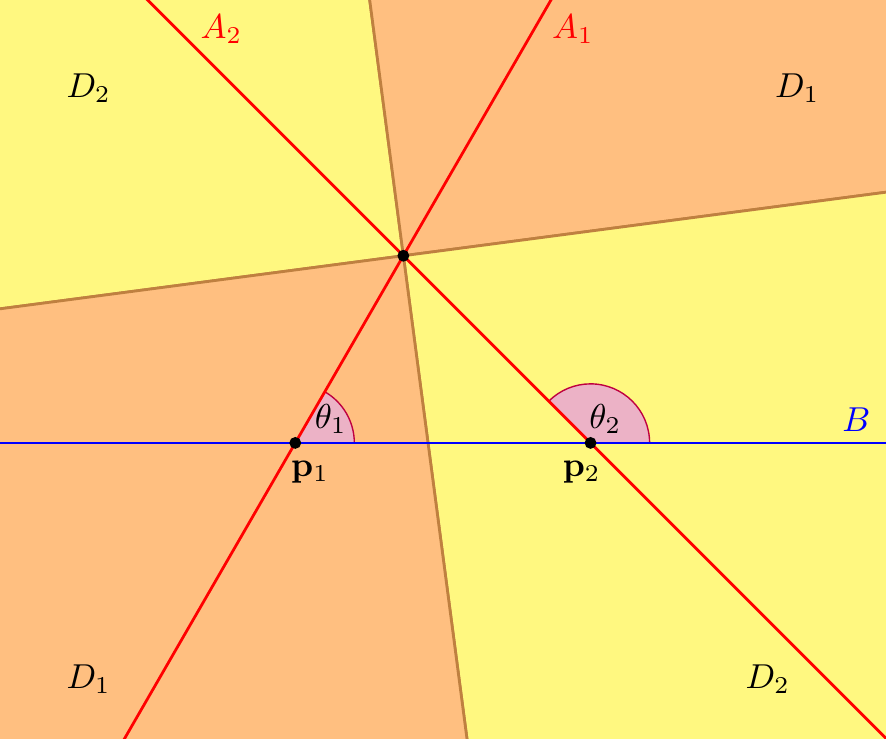}
  \caption{Regions where the Douglas--Rachford operator is single,
    respectively, multi-valued in the case of two lines (red) and one line
    (blue). The orange domain is $D_1$, the yellow domain is $D_2$ (the
    operator is singleton valued in both cases), and the two brown lines
    are $D_3$ (here the operator has two values).}
  \label{figDomains}
\end{figure} For $i=1,2$, let $V_i\colon \R^2 \to \R_+$ be the
functions defined by
\begin{align}\label{def V's} V_i(\x) \coloneqq \|\x-\p_i\|^2.
\end{align} Following the reasoning of Proposition~\ref{prop decay
  V's}, these are now \emph{local} Lyapunov functions for the
Douglas--Rachford iteration
\begin{align} \x^{+}\in T_{A,B}\,\x,
  \label{eq:1}
\end{align} i.e., if $\x_{0}$ is already sufficiently close to a fixed
point $\p_{i}$ then the corresponding sub-level set of $V_{i}$,
$\{\x\in\R^{2}: V_{i}(\x)\leq V_{i}(\x_{0})\}$, is completely
contained in $D_{i}$ and hence invariant under~\eqref{eq:1}. By the
decay condition~\eqref{eq:2} the sequence generated by~\eqref{eq:1}
must converge to $\p_{i}$.

However, if $\|\x_{0}-\p_{i}\|$ is too large for the sub-level set to
be completely contained in $D_{i}$, then it is \emph{a priori} not
clear to which point solutions of~\eqref{eq:1} emerging from $\x_{0}$
converge, or whether they converge at all.

Theorem~\ref{main thm} establishes that the globally defined, local
Lyapunov functions can indeed be combined to a global Lyapunov
function
\begin{align} V(\x)\coloneqq f\big(V_{1}(\x),V_{2}(\x)\big),
\end{align} provided a sufficient condition on the angles $\theta_{1}$
and $\theta_{2}$ is met. It is common in Lyapunov stability analysis
that conditions are only sufficient and not necessary (see
\cite{Kel15} on the concept of converse Lyapunov functions; their
existence proofs are usually non-constructive). This global Lyapunov
function in turn is a certificate for the global asymptotic stability
of the set $\{\p_{1},\p_{2}\}$ of fixed points for the iterative
scheme~\eqref{eq:1}, that is, this Lyapunov function establishes among
other properties that \emph{every} solution of~\eqref{eq:1} converges
either to $\p_{1}$ or to $\p_{2}$ for certain configurations of angles
$\theta_{1}$ and $\theta_{2}$.

Before we can derive Theorem~\ref{main thm}, we need to establish a
number of technical results that are summarized in the following
proposition.

\begin{proposition}
  \label{prop:thm-a-helper} Given $\rho>0$, let
  \begin{align*} \mathcal{B}_1(\rho) & \coloneqq \big\{\x \in
                                       \R^2\colon V_1(T_{A_2,B}\x) > \rho V_1(\x)\big\} \\
    \mathcal{B}_2(\rho) & \coloneqq \big\{\x\in\R^2\colon V_2(T_{A_1,B}\x)
                          > \rho V_2(\x)\big\}
  \end{align*} denote the sets where function $V_{i}$ increases by at
  least a factor of $\rho$ along solutions generated by $T_{A_{3-i},B}$.

  If $\rho > \cos^2\theta_2$ then
  \begin{align}
    \label{eq:5} \mathcal{B}_1(\rho) &= B\left(\p_1 +
                                       \frac{\cos\theta_2\sin\theta_2}{\rho-\cos^2\theta_2}\e_2, \frac{\sqrt
                                       \rho\sin\theta_2}{\rho-\cos^2\theta_2}\,\right) \\ \intertext{and if
    $\rho >\cos^2\theta_1$ then}
    \label{eq:6} \mathcal{B}_2(\rho) &= B\left(\p_2 -
                                       \frac{\cos\theta_1\sin\theta_1}{\rho-\cos^2\theta_1}\e_2, \frac{\sqrt
                                       \rho\sin\theta_1}{\rho-\cos^2\theta_1}\,\right),
  \end{align} that is, the sets $\mathcal{B}_{i}$ are open balls.

  If, moreover, $\rho \ge (1+\sin\theta_{1})(1+\sin\theta_{2})$, then
  \begin{align}
    \label{eq:7} \mathcal{B}_{1}(\rho) & \subseteq D_1,\\
    \intertext{and}
    \label{eq:8} \mathcal{B}_{2}(\rho) & \subseteq D_2,
  \end{align} that is, the region where $V_{i}$ increases along
  solutions generated by $T_{A_{3-i},B}$ is completely contained in
  $D_{i}$, the set where $T_{A,B}$ is singleton-valued and coincides
  with $T_{A_{i},B}$.
\end{proposition}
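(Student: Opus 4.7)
The plan is to use the explicit form $T_{A_i,B}\x = \p_i + \cos\theta_i\,\M_{\theta_i}(\x - \p_i)$ from Proposition~\ref{prop decay V's} to turn each defining inequality into a quadratic inequality in $\x$, from which the disk structure and the center/radius formulas can be read off. Since the two halves of the statement are symmetric under interchanging the indices $1 \leftrightarrow 2$, it suffices to treat $\mathcal{B}_1(\rho)$ in detail; the formula for $\mathcal{B}_2(\rho)$ then follows by relabeling, with the sign change in front of $\e_2$ in~\eqref{eq:6} reflecting that $\p_2 - \p_1 = \e_1$ while $\p_1 - \p_2 = -\e_1$.

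For~\eqref{eq:5} I would shift to $\y \coloneqq \x - \p_1$ and write $T_{A_2,B}\x - \p_1 = \w + \cos\theta_2\,\M_{\theta_2}\y$, where $\w \coloneqq (\p_2 - \p_1) - \cos\theta_2\,\M_{\theta_2}(\p_2 - \p_1)$. Expanding the square and using that $\M_{\theta_2}$ is an isometry yields
\begin{align*}
V_1(T_{A_2,B}\x) - \rho\,V_1(\x) \;=\; (\cos^2\theta_2 - \rho)\|\y\|^2 + 2\cos\theta_2\,\langle \M_{\theta_2}^T\w, \y\rangle + \|\w\|^2.
\end{align*}
For $\rho > \cos^2\theta_2$ the leading coefficient is negative, so the set $\{V_1(T_{A_2,B}\x) > \rho\,V_1(\x)\}$ is the interior of an open disk. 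Completing the square in $\y$, together with the short direct calculations $\|\w\| = \sin\theta_2$ and $\M_{\theta_2}^T\w = \sin\theta_2\,\e_2$, then produces precisely the center and radius stated in~\eqref{eq:5}.

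For the inclusion~\eqref{eq:7} I would argue geometrically. The set $D_3$ is the union of the two angle bisectors of $\{A_1, A_2\}$ through their common point of intersection, and $\p_1 \in D_1$ since $\p_1 \neq \p_2$ forces $d(\p_1, A_1) = 0 < d(\p_1, A_2)$. As $\rho \to \infty$ the ball in~\eqref{eq:5} contracts to $\{\p_1\}$, so the containment $\mathcal{B}_1(\rho) \subseteq D_1$ is equivalent to each of the two bisectors lying at distance strictly greater than the radius from the ball's center. I would parametrize each bisector by a unit normal expressible in terms of $\theta_1$ and $\theta_2$, substitute the explicit center and radius from~\eqref{eq:5}, and reduce the resulting pair of distance inequalities, via half-angle and product-to-sum identities, to the single clean threshold $\rho \ge (1+\sin\theta_1)(1+\sin\theta_2)$. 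The inclusion~\eqref{eq:8} then follows by the same argument with the indices swapped.

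The main obstacle is this last trigonometric reduction: the center and radius of $\mathcal{B}_1(\rho)$ depend only on $\theta_2$, yet the constraint imposed by the bisectors mixes $\theta_1$ and $\theta_2$ in a nontrivial way, and the fact that the resulting threshold factorizes cleanly as $(1+\sin\theta_1)(1+\sin\theta_2)$ is not apparent from the raw algebra. Extracting this product form, and verifying that it is simultaneously sharp enough for both bisectors, is presumably the calculation the authors defer to Appendix~\ref{app:proof-prop}.
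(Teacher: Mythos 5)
Your route coincides with the paper's own proof in Appendix~\ref{app:proof-prop}: expand $V_1(T_{A_2,B}\x)>\rho V_1(\x)$ into a quadratic inequality in $\x$, complete the square to obtain \eqref{eq:5}, and then reduce \eqref{eq:7} to comparing the radius of $\mathcal{B}_1(\rho)$ with the distance from its center to the two bisector lines making up $D_3$. Your treatment of \eqref{eq:5} is correct and in fact marginally cleaner than the paper's: writing $T_{A_2,B}\x-\p_1=\w+\cos\theta_2\,\M_{\theta_2}(\x-\p_1)$ with $\w=\e_1-\cos\theta_2\M_{\theta_2}\e_1=\sin\theta_2(\sin\theta_2,\cos\theta_2)$, your identities $\|\w\|=\sin\theta_2$ and $\M_{\theta_2}^{T}\w=\sin\theta_2\e_2$ check out and lead directly to the paper's inequality \eqref{bound before complete}, without the factor $1/\cos\theta_2$ that forces the paper to handle $\theta_2=\pi/2$ as a separate case.

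For \eqref{eq:7}, however, you stop exactly where the work is. The calculation you defer is not a routine clean-up but the entire content of the second half of the proposition: using Lemma~\ref{aux-lemma-D3} (normals \eqref{for n1}--\eqref{for n2} at the point $\c$ in \eqref{for c}), the two squared-distance inequalities reduce to $\rho^2-(2\mp2\sin\theta_1\sin\theta_2)\rho+\cos^2\theta_1\cos^2\theta_2\ge0$, i.e.\ \eqref{eq:22} and \eqref{eq:24}; the second is the binding one and its larger root is exactly $(1+\sin\theta_1)(1+\sin\theta_2)$. Until that is carried out you have not established the threshold, so the proof of \eqref{eq:7}--\eqref{eq:8} is incomplete. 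Two smaller repairs are also needed: (i) you must show that $\mathcal{B}_1(\rho)$ meets $D_1$, not merely that $\p_1\in D_1$ and that the ball shrinks to $\p_1$ as $\rho\to\infty$; the paper does this by checking that the center is closer to $A_1$ than to $A_2$, and alternatively it follows because $\p_1$ lies inside the open ball (its distance to the center is $|\cos\theta_2|\sin\theta_2/(\rho-\cos^2\theta_2)$, which is less than the radius since $|\cos\theta_2|<\sqrt\rho$). (ii) Because the ball is open, the correct requirement is distance \emph{at least} the radius, not strictly greater: at $\rho=(1+\sin\theta_1)(1+\sin\theta_2)$ equality holds in \eqref{eq:24}, so demanding strict inequality would wrongly exclude the endpoint that the proposition explicitly includes.
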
 Observe that
$(1+\sin\theta_{1})(1+\sin\theta_{2})>1\geq \cos^{2}\theta_{i}$ for
$i=1,2$ in our setting.

The proof of Proposition~\ref{prop:thm-a-helper} can be found in
Appendix~\ref{app:proof-prop}.  We can now state the main result of this
section. Beforehand we should point out that local convergence of the
Douglas--Rachford iteration is already guaranteed
by~\cite{MR3227480}. Our result implies \emph{global} convergence,
despite the complex geometry of the regions of attraction of the
individual fixed points, see Fig.~\ref{figRegAttrct}.
\begin{figure}[htb] \centering
  \includegraphics[width=.65\linewidth]{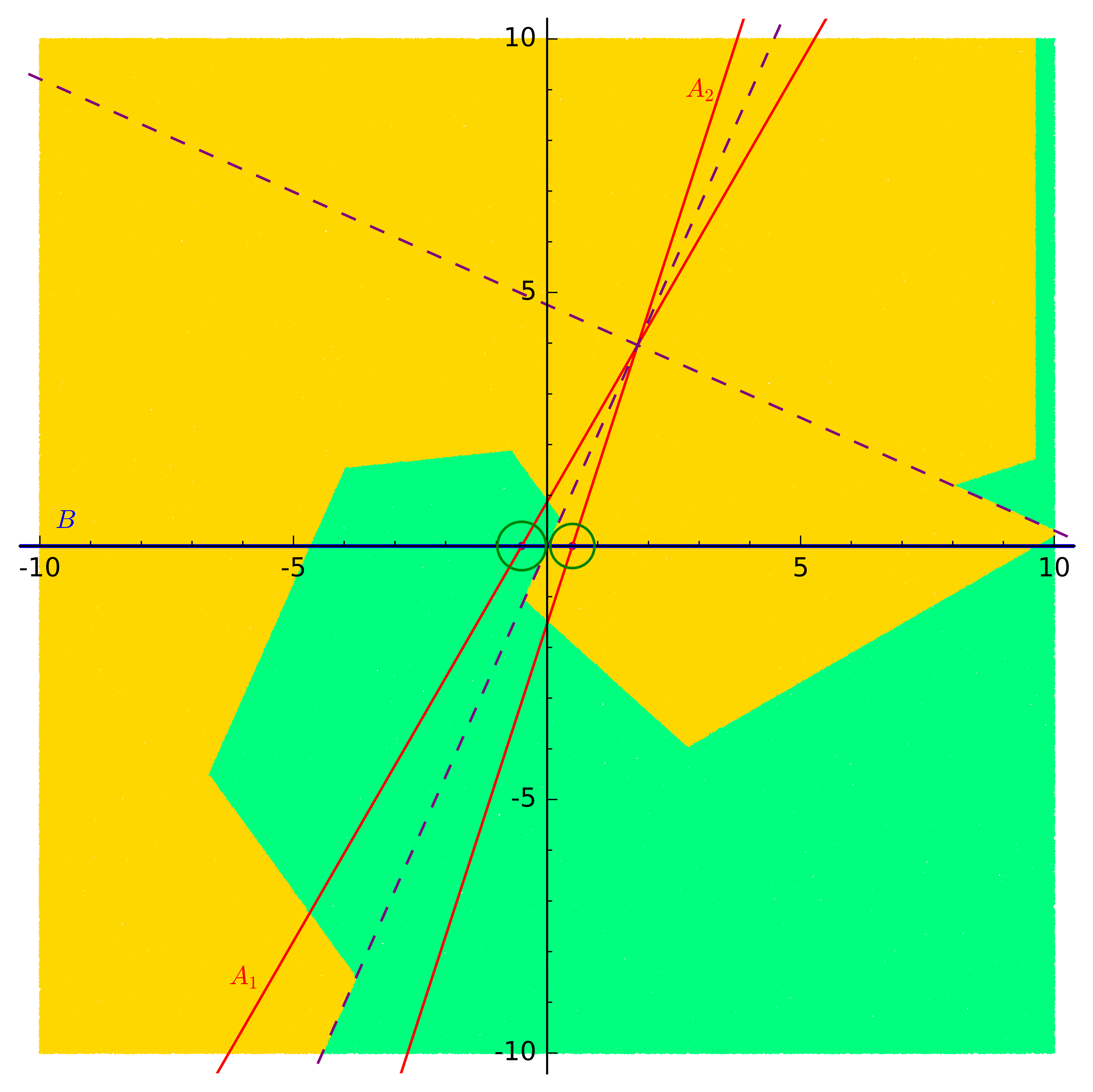}
  \caption{Regions of attraction for the case $\theta_1 = \pi/3$,
    $\theta_2 = 2\pi/5$ (for which condition~\eqref{eq:26} holds). The
    green circles are centered at $\p_1$, $\p_2$, with radii
    $d(\p_1,D_3)$, $d(\p_2,D_3)$, respectively. The figure is based on a
    simulation with about 1.5 million data points. For each randomly
    chosen initial condition a corresponding solution is computed until it
    enters the inside of one of the regions enclosed by the green circles
    (which are sub-level sets of $V_{i}$ and completely contained in
    $D_{i}$, thus invariant under $T_{A,B}$), at which point necessarily
    the solution converges to the respective intersection point
    $\p_{i}$. The initial starting point is then colored accordingly.}
  \label{figRegAttrct}
\end{figure}
\begin{theorem}\label{main thm} Suppose that either
  $\theta_{1}=\pi/2$, $\theta_{2}=\pi/2$, or that
  \begin{equation}
    \label{eq:26} \Big(\log
    \big((1+\sin\theta_{1})(1+\sin\theta_{2})\big)\Big)^{2} < \log
    (\cos^{2}\theta_{1}) \log (\cos^{2}\theta_{2}).
  \end{equation} Then there exist $\alpha \in ]0,\infty[$ and $\gamma
  \in ]0,1[$ such that the function $V\colon \R^2\to \R_+$, defined by
  \begin{align}\label{def V} V(\x) \coloneqq V_1(\x)^\alpha V_2(\x),
  \end{align} satisfies
  \begin{itemize}
  \item inequalities~\eqref{cond omegas} with
    \begin{align}
      \label{eq:9} \omega_1(\x) & \coloneqq \min\{\|\x-\p_1\|,
                                  \|\x-\p_2\|\} = d(\x,A\cap B),\\
      \label{eq:10} \omega_2(\x) & \coloneqq \max\{\|\x-\p_1\|,
                                   \|\x-\p_2\|\},\\
      \label{eq:11} \varphi_{1}(r) & \coloneqq \varphi_{2}(r)
                                     \coloneqq r^{2\alpha+2};
    \end{align}
  \item the decrease condition
    \begin{align}
      \label{multi decrease V} \sup_{\y \in T_{A,B}\x}V(\y) \le \gamma
      V(\x);
    \end{align}
  \item as well as $V(\x)=0$ if and only if $\x\in\mathcal{A}\coloneqq
    \{\p_{1},\p_{2}\}$.
  \end{itemize} That is, the Douglas--Rachford iteration~\eqref{eq:1}
  is robustly $\kl$-stable with respect to $(\omega_{1},\omega_{2})$.
\end{theorem}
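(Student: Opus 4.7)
The plan is to verify that the function $V(\x)=V_{1}(\x)^{\alpha}V_{2}(\x)$ satisfies the three defining properties of a Lyapunov function in the sense of Definition~\ref{def lyap} for a suitably chosen $\alpha>0$, and then invoke Theorem~\ref{thm KT}. The hypotheses of that theorem are satisfied since $T_{A,B}\x$ is always a set of at most two points (hence compact) by~\eqref{def T}, and $V$ is continuous on $\R^{2}$.

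The bounds~\eqref{cond omegas} and the zero condition~\eqref{cond invar set} will be immediate. Since $V(\x)=\|\x-\p_{1}\|^{2\alpha}\|\x-\p_{2}\|^{2}$, the sandwich $\omega_{1}(\x)^{2\alpha+2}\le V(\x)\le\omega_{2}(\x)^{2\alpha+2}$ follows from $\omega_{1}(\x)\le\|\x-\p_{i}\|\le\omega_{2}(\x)$, delivering~\eqref{cond omegas} with $\varphi_{1}(r)=\varphi_{2}(r)=r^{2\alpha+2}\in\K_{\infty}$. Because $\alpha>0$, $V(\x)=0$ iff some $V_{i}(\x)=0$ iff $\x\in\{\p_{1},\p_{2}\}$, and this set coincides with $\A$ since the fixed-point set of $T_{A,B}$ is exactly $A\cap B$.

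The main work will be the decay condition~\eqref{multi decrease V}, for which I would split according to the three regions $D_{1}$, $D_{2}$, $D_{3}$ and combine Proposition~\ref{prop decay V's} (controlling the ``correct'' line, where $V_{i}$ decays by a factor of $\cos^{2}\theta_{i}$) with Proposition~\ref{prop:thm-a-helper} (controlling the ``wrong'' line, where $V_{3-i}$ may grow but only inside the ball $\mathcal{B}_{3-i}(\rho)$). Setting $\rho\coloneqq(1+\sin\theta_{1})(1+\sin\theta_{2})$, Proposition~\ref{prop:thm-a-helper} yields $\mathcal{B}_{1}(\rho)\subseteq D_{1}$ and $\mathcal{B}_{2}(\rho)\subseteq D_{2}$. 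The key observation is then that for $\x\in D_{1}\cup D_{3}$, where $T_{A_{1},B}\x$ is a possible value of $T_{A,B}\x$, we have $\x\notin D_{2}\supseteq\mathcal{B}_{2}(\rho)$, forcing $V_{2}(T_{A_{1},B}\x)\le\rho V_{2}(\x)$; combined with $V_{1}(T_{A_{1},B}\x)=\cos^{2}\theta_{1}\cdot V_{1}(\x)$ this gives $V(T_{A_{1},B}\x)\le(\cos^{2}\theta_{1})^{\alpha}\rho\cdot V(\x)$. A symmetric argument handles $\x\in D_{2}\cup D_{3}$, producing $V(T_{A_{2},B}\x)\le\rho^{\alpha}\cos^{2}\theta_{2}\cdot V(\x)$. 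Letting $\gamma$ be the maximum of the two factors then yields~\eqref{multi decrease V}.

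What remains is to arrange $\gamma<1$ by a choice of $\alpha$. Taking logarithms, both $(\cos^{2}\theta_{1})^{\alpha}\rho<1$ and $\rho^{\alpha}\cos^{2}\theta_{2}<1$ hold precisely for $\alpha$ in the interval $\bigl(\log\rho\,/\,|\log\cos^{2}\theta_{1}|,\ |\log\cos^{2}\theta_{2}|\,/\,\log\rho\bigr)$, which is non-empty exactly when $(\log\rho)^{2}<\log(\cos^{2}\theta_{1})\cdot\log(\cos^{2}\theta_{2})$ (the product of the two negative logarithms being positive), that is, exactly condition~\eqref{eq:26}. The boundary case $\theta_{i}=\pi/2$ is handled separately and more easily: then $\cos\theta_{i}=0$, so formula~\eqref{eq:4} forces $T_{A_{i},B}\x\equiv\p_{i}$ and thus $V_{i}(T_{A_{i},B}\x)\equiv0$, making that branch of $T_{A,B}$ collapse $V$ to zero automatically, while the remaining branch decays for any sufficiently small $\alpha>0$. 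The real obstacle in this argument is Proposition~\ref{prop:thm-a-helper}---particularly the confinement statements $\mathcal{B}_{i}(\rho)\subseteq D_{i}$, which require the careful geometric computation deferred to Appendix~\ref{app:proof-prop}; once those are granted, the rest of the proof is a clean one-parameter optimization in $\alpha$.
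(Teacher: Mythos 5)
Your proposal is correct and follows essentially the same route as the paper: the same sandwich and zero-set verifications, the same use of Proposition~\ref{prop:thm-a-helper} with $\rho=(1+\sin\theta_{1})(1+\sin\theta_{2})$ to control the ``wrong'' branch, the same pair of inequalities $(\cos^{2}\theta_{1})^{\alpha}\rho<1$ and $\rho^{\alpha}\cos^{2}\theta_{2}<1$, and the same logarithmic-interval argument showing that their simultaneous solvability is exactly condition~\eqref{eq:26} (the paper merely phrases the decrease step as a proof by contradiction, where you argue directly via the contrapositive). One small slip: in the boundary case $\theta_{2}=\pi/2$ with $\theta_{1}<\pi/2$ the surviving branch carries the factor $(\cos^{2}\theta_{1})^{\alpha}\rho$, which requires $\alpha$ sufficiently \emph{large} rather than small --- the existence claim is unaffected, but the parenthetical justification should be adjusted.
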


\begin{proof} First we establish that there exist $\alpha \in
  ]0,\infty[$ and $\gamma \in ]0,1[$ such that
  \begin{align}\label{the condition}
    \begin{aligned}
      &&(\cos^{2}\theta_1)^\alpha\big((1+\sin\theta_{1})(1+\sin\theta_{2})\big)\phantom{^\alpha}
      &\le \gamma \\ &\text {and}\\
      &&(\cos^{2}\theta_2)\phantom{^\alpha}\big((1+\sin\theta_{1})(1+\sin\theta_{2})\big)^\alpha
      &\le \gamma.
    \end{aligned}
  \end{align} Fig.~\ref{FigureDiscsGoodBad} visualizes the
  relationship between condition~\eqref{the condition} and
  assertion~\eqref{multi decrease V} in Theorem~\ref{main thm}.
  \begin{figure}[htb!]  \centering \centering
    \includegraphics[width=.495\linewidth]{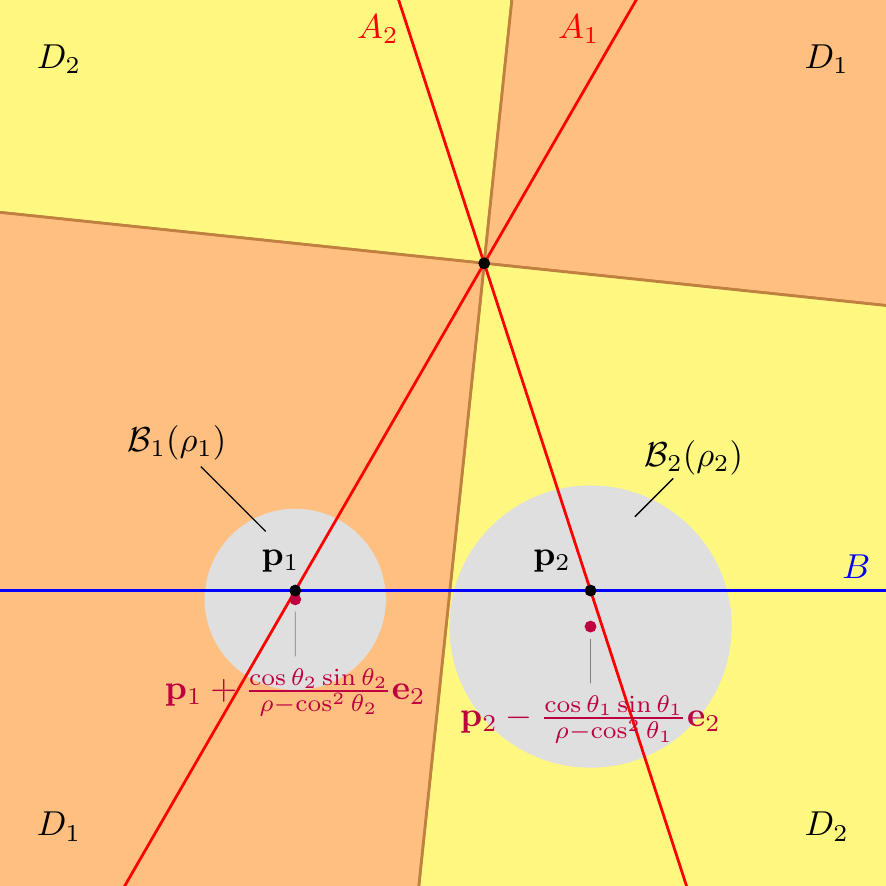} \hfill
    \includegraphics[width=.495\linewidth]{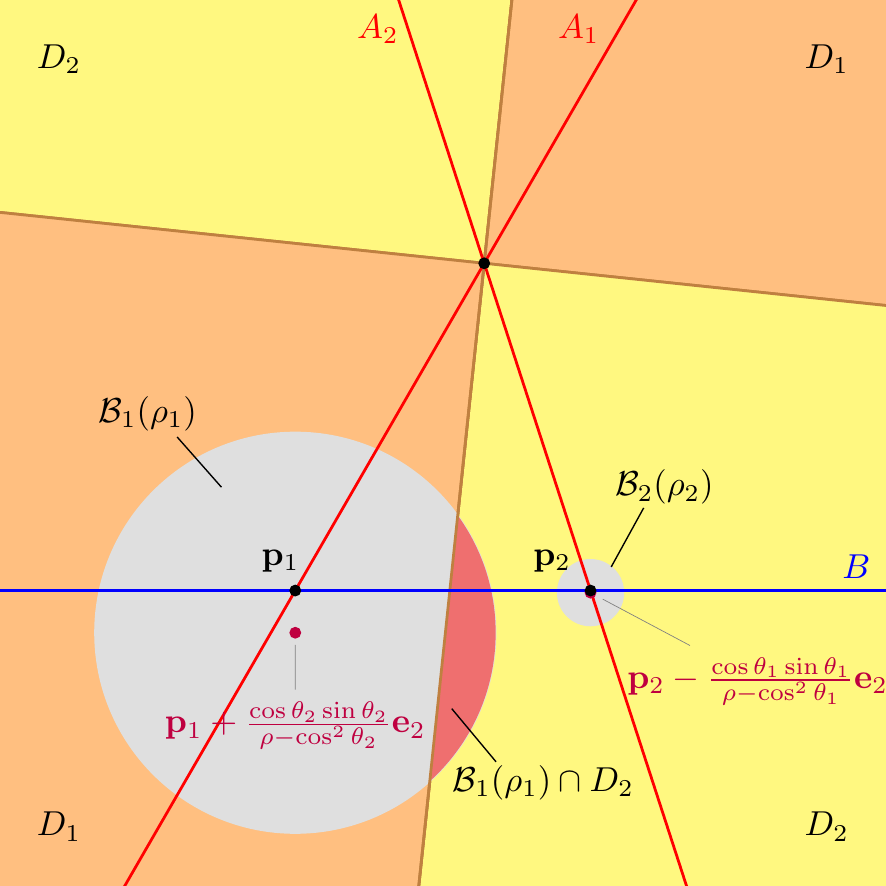}
    \caption{The open balls $\mathcal{B}_1(\rho_1)$,
      $\mathcal{B}_2(\rho_2)$, defined in
      Proposition~\ref{prop:thm-a-helper}, for two different choices of
      $\alpha$ ($\alpha=1$ on the left and $\alpha=3$ on the right),
      $\theta_{1}=\pi/3$, $\theta_{2}=3\pi/5$ and $\gamma=0.95$.  Here
      $\rho_1 = \left(\frac{\gamma}{\cos^2\theta_2}\right)^{1/\alpha}$ and
      $\rho_2 = \gamma\left(\frac{1}{\cos^2\theta_1}\right)^{\alpha}$.  In
      the left figure conditions~\eqref{the condition} both hold, while in
      the right figure the second condition is violated. In the left figure
      the function $V$ in~\eqref{def V} satisfies~\eqref{multi decrease V}
      everywhere, while in the right figure it does not. The red slice of
      the open ball in the right figure is the set of points where
      $V(\y)>\gamma V(\x)$ for $\y\in T_{A,B}\,\x$.
      \label{FigureDiscsGoodBad} }
  \end{figure} Inequalities~\eqref{the condition} trivially hold if
  $\theta_{1}=\pi/2$ or $\theta_{2}=\pi/2$, so if $\theta_{1}\ne\pi/2$
  and $\theta_{2}\ne\pi/2$ then for condition~\eqref{the condition} to
  hold it is necessary and sufficient that
  $(\cos^{2}\theta_{1})^{\alpha}\big((1+\sin\theta_{1})(1+\sin\theta_{2})\big)<1$
  and simultaneously
  $(\cos^{2}\theta_{2})\big((1+\sin\theta_{1})(1+\sin\theta_{2})\big)^{\alpha}<1$,
  which in turn is equivalent to \eqref{eq:26}.
  
  The first claim about the functions defined in~\eqref{eq:9},
  \eqref{eq:10}, and \eqref{eq:11} satisfying~\eqref{cond omegas}
  follows by direct computation and the definition~\eqref{def V} of
  $V$. Obviously the functions $\varphi_{i}$ are of class $\K_{\infty}$
  as $\alpha>0$.
  
  From its definition, $V(\x)=0$ holds if and only if $\x$ is either
  $\p_{1}$ or $\p_{2}$. This establishes the third claim.

  To establish the second claim, i.e., the decrease condition
  \eqref{multi decrease V}, we have to consider several cases.  The
  first is that $\x \in \{\p_1,\p_2\}$.  Then $V(\x)$ $=$ $V(T_{A,B}\x)
  = 0$ and so~\eqref{multi decrease V} holds.

  Consider next the case that $\x\in D_1\setminus\{\p_1,\p_2\}$. In
  this case $T_{A,B}=T_{A_{1},B}$ is single-valued by~\eqref{def T}.  We
  find
  \begin{eqnarray}\label{equal with cos} \nonumber V(T_{A,B}\x) & = &
                                                                      V(T_{A_1,B}\x) = V_1^\alpha(T_{A_1,B}\x) V_2(T_{A_1,B}\x) \\ & = &
                                                                                                                                         \cos^{2\alpha}\theta_1V^\alpha_1(\x)V_2(T_{A_1,B}\x),
  \end{eqnarray} where in the second line we have used \eqref{eq:2} of
  Proposition~\ref{prop decay V's}. Now, if $\theta_{1}=\nfrac\pi2$ then
  this reads $V(T_{A,B}\x)=0\leq \gamma V(\x)$, i.e., the proof for the
  case $\x\in D_{1}\setminus\{\p_1,\p_2\}$ is complete. So in the
  following assume that $\theta_{1}\ne \nfrac\pi2$ and note that we also
  have $V_1(\x)>0$ since we assumed that $\x\ne \p_{1}$. By way of
  contradiction, assume now that we have
  \begin{align}\label{cond bigger} V(T_{A,B}\x) > \gamma V(\x).
  \end{align} Then we can arrange~\eqref{equal with cos} into
  \begin{eqnarray*} V_2(T_{A_1,B}\x) & = &
                                           \left(\frac{1}{\cos^2\theta_1}\right)^\alpha\cdot
                                           \frac{V(T_{A,B}\x)}{V_1^\alpha(\x)} \stackrel{\eqref{cond bigger}}{>}
                                           \left(\frac{1}{\cos^2\theta_1}\right)^\alpha\gamma\cdot
                                           \frac{V(\x)}{V_1^\alpha(\x)} \\ & \stackrel{\eqref{def V}}{=} &
                                                                                                           \left(\frac{1}{\cos^2\theta_1}\right)^\alpha \gamma V_2(\x)
                                                                                                           \stackrel{\eqref{the condition}}{\ge}
                                                                                                           (1+\sin\theta_{1})(1+\sin\theta_{2}) V_2(\x).
  \end{eqnarray*} An application of
  Proposition~\ref{prop:thm-a-helper} lets us deduce that $\x$ must be
  in $D_{2}$. However, the sets $D_{1}$ and $D_{2}$ are disjoint, so
  this contradicts our assumptions.  This means that the condition
  $V(T_{A,B}\x) > \gamma V(\x)$ cannot hold and we have $V(T_{A,B}\x)
  \le \gamma V(\x)$.

  The case $\x \in D_2\setminus\{\p_1,\p_2\}$ is analogous to the
  previous one and is thus omitted.
  
  Finally, assume that $\x \in D_3\setminus\{\p_1,\p_2\}$. Then
  by~\eqref{def T} we have $T_{A,B}\x = \big\{T_{A_1,B}\x,
  T_{A_2,B}\x\big\}$.  Now, if, by way of contradiction, we assume
  $V(T_{A_1,B}\x) > \gamma V(\x)$ then as before it must follow that
  $\x\in D_2$. If $V(T_{A_2,B}\x) > \gamma V(\x)$ then it must follow
  that $\x \in D_1$. In both cases we get $\x \notin D_3$, and so
  neither of these inequalities can hold true. We therefore have in this
  case $V(T_{A_1,B}\x) \le \gamma V(\x)$ and $V(T_{A_2,B}\x) \le \gamma
  V(\x)$. We have established that inequality~\eqref{multi decrease V},
  respectively,~\eqref{cond decay}, holds for all $\x\in \R^2$.

  This establishes that $V$ is indeed a global Lyapunov function
  for~\eqref{eq:1} with respect to $(\omega_{1},\omega_{2})$, and by
  Theorem~\ref{thm KT} it follows that the difference
  inclusion~\eqref{eq:1} is robustly $\kl$-stable.
\end{proof}

\begin{remark}
  \label{rem:simpler-condition} Numerical evidence suggests that the
  region in the parameter space $\{(\theta_{1},\theta_{2})\colon$
  $0<\theta_{1}\leq\pi/2$, $\theta_{1}<\theta_{2}<\pi \}$ where all
  solutions of the Douglas--Rachford iteration converge to
  $\{\p_{1},\p_{2}\}$ is bigger than the set shown in
  Fig.~\ref{subfig:a}, while for some parameter combinations that are
  outside this set, the Douglas--Rachford iteration may get caught by
  attractive periodic orbits, cf.\ Figs.~\ref{subfig:b}--\ref{subfig:d}.
\end{remark}

\begin{figure}[htbp] \centering \def\figwidth{.475\linewidth}
  \subfloat[The different regions (red) of parameters for which not all
  solutions converge to a fixed point of $T_{A,B}$, relative to the
  admissible parameters (green) and region where~\eqref{eq:26} holds
  (blue).]{\label{subfig:a}\includegraphics[width=\figwidth]{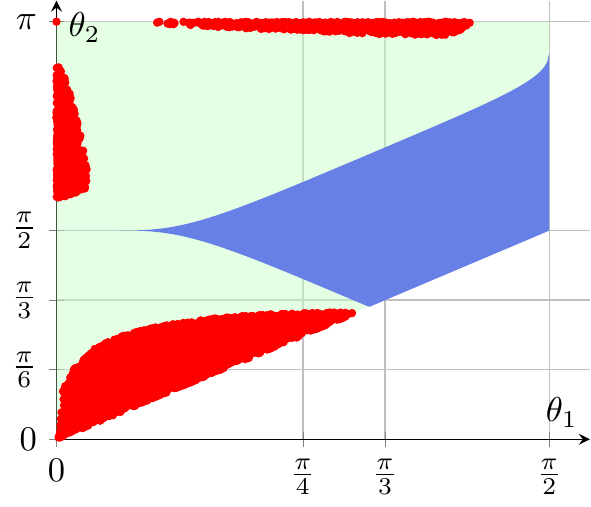}}\hfill
  \subfloat[ The parameters $\theta_{1}=0.703469$, $\theta_{2}=3.138852$
  admit a periodic orbit with period length 1410 containing
  $\x_{0}=(0.392560, -0.351588)$.
  ]{\label{subfig:b}\includegraphics[width=\figwidth]{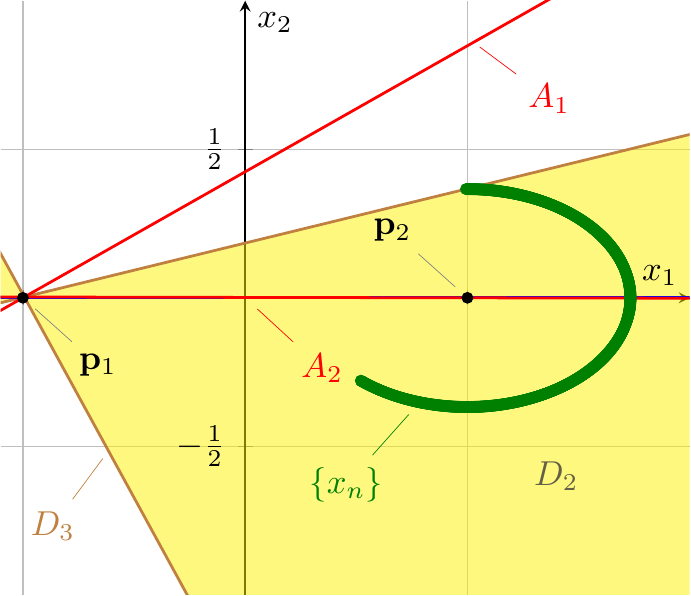}}\\%
  \subfloat[ The parameters $\theta_{1}=0.082719$, $\theta_{2}=2.064601$
  admit a periodic orbit with period length 58 containing
  $\x_{0}=(-0.123641, -0.510395)$.
  ]{\label{subfig:c}\includegraphics[width=\figwidth]{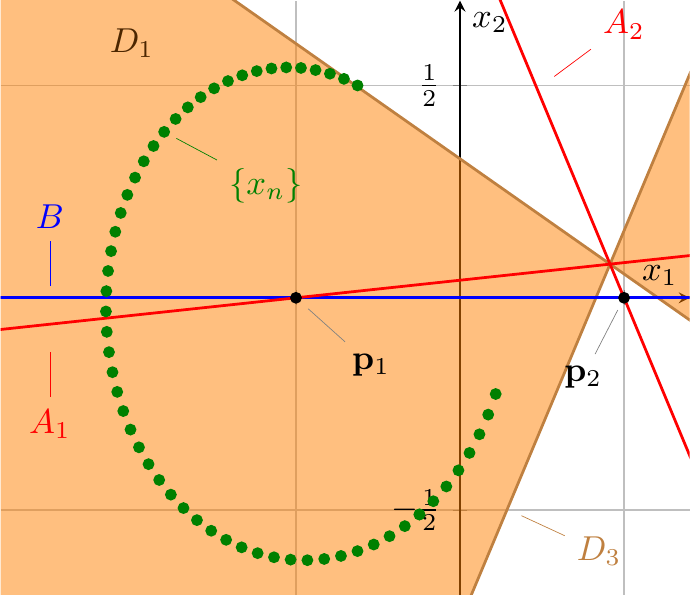}}\hfill
  \subfloat[ The parameters $\theta_{1}=0.748491$, $\theta_{2}=0.772301$
  admit a periodic orbit with period length 2 containing
  $\x_{0}=(0.101912, 0.189275)$.
  ]{\label{subfig:d}\includegraphics[width=\figwidth]{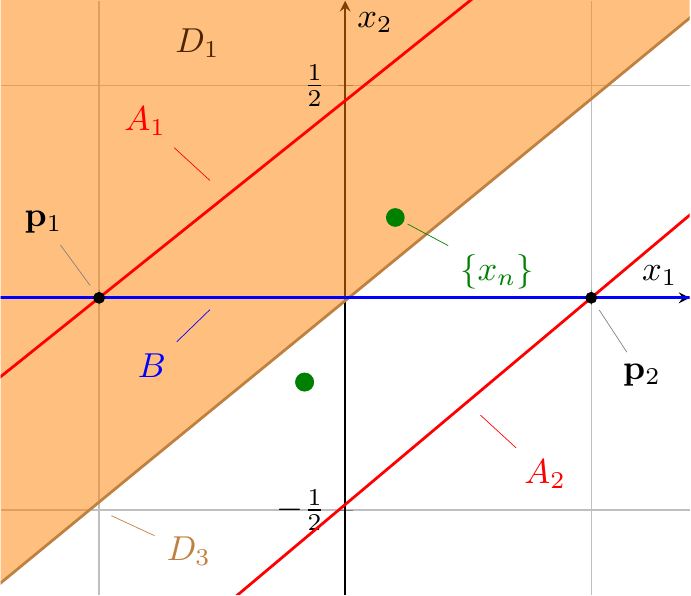}}
  \caption{Numerical experiments. In Fig.~\ref{subfig:a} we see
    regions in the $(\theta_{1},\theta_{2})$ plane (restricted to
    admissible pairs) of parameter combinations for which not all
    solutions converge to $\{\p_{1},\p_{2}\}$.  A sample solution (green)
    for the lump of points in the top right of the plot is shown in
    Fig.~\ref{subfig:b}, as typical solution from the region on the left
    in Fig.~\ref{subfig:c}, and one from the region closest to the
    $\theta_{1}$-axis in Fig.~\ref{subfig:d}.  }
  \label{fig:periodic-orbits}
\end{figure}

Next, we discuss how the order of reflections $R_{A}$ and $R_{B}$
affects the Lyapunov function construction in this paper.

\begin{corollary}
  \label{cor:reverse-order-T-BA} Under the same assumptions as in
  Theorem~\ref{main thm}, the same function $V$ given in~\eqref{def V}
  satisfies the same conclusions for the\linebreak[4] Douglas--Rachford
  iteration given by
  \begin{equation*} \z^{+}\in T_{B,A}\,\z.
  \end{equation*} In other words, for this particular geometry the
  order of the reflections in the Douglas--Rachford iteration does not
  affect its robust stability.
\end{corollary}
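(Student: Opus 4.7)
The plan is to exploit the fact that both fixed points $\p_{1},\p_{2}$ lie on the line $B$, so that the reflection $R_{B}$ is a geometric symmetry which both conjugates $T_{A,B}$ into $T_{B,A}$ and preserves the Lyapunov function $V$; the decrease inequality for $T_{B,A}$ will then follow from Theorem~\ref{main thm} almost for free.

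First, I would verify the operator identity $T_{B,A} = R_{B}\circ T_{A,B}\circ R_{B}$. This is immediate from $R_{B}^{2}=I$ and the definition $T_{A,B}=(I+R_{B}R_{A})/2$:
\begin{equation*}
R_{B}\,T_{A,B}(R_{B}\x) \;=\; \tfrac{1}{2}\bigl(R_{B}R_{B}\x + R_{B}R_{B}R_{A}R_{B}\x\bigr) \;=\; \tfrac{1}{2}\bigl(\x + R_{A}R_{B}\x\bigr) \;=\; T_{B,A}\x.
\end{equation*}
The only subtlety is the multi-valuedness of $R_{A}$ on $D_{3}$, but this is harmless: on both sides of the identity the choice of branch is governed by the location of the common point $R_{B}\x$ relative to $D_{1},D_{2},D_{3}$, so the set-valued equality holds. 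Next, I would observe that $V$, $\omega_{1}$ and $\omega_{2}$ are all invariant under $R_{B}$: since $R_{B}$ is an isometry and $R_{B}\p_{i}=\p_{i}$ because $\p_{i}\in B$, we get $V_{i}(R_{B}\x) = \|R_{B}\x-R_{B}\p_{i}\|^{2} = \|\x-\p_{i}\|^{2} = V_{i}(\x)$ for $i=1,2$, hence $V=V_{1}^{\alpha}V_{2}$ satisfies $V\circ R_{B}=V$, and the same argument handles $\omega_{1},\omega_{2}$ and the fixed-point set $\mathcal{A}=\{\p_{1},\p_{2}\}$.

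Combining the two observations yields the decrease condition~\eqref{multi decrease V} for $T_{B,A}$: for any $\x\in\R^{2}$ and any $\z\in T_{B,A}\x$, write $\z=R_{B}\y$ with $\y\in T_{A,B}(R_{B}\x)$; then
\begin{equation*}
V(\z) \;=\; V(R_{B}\y) \;=\; V(\y) \;\leq\; \gamma\,V(R_{B}\x) \;=\; \gamma\,V(\x),
\end{equation*}
invoking Theorem~\ref{main thm} at the point $R_{B}\x$. The remaining Lyapunov clauses (sandwich bounds $\varphi_{1}(\omega_{1}(\x))\leq V(\x)\leq \varphi_{2}(\omega_{2}(\x))$ and $V(\x)=0\iff\x\in\mathcal{A}$) involve only $V,\omega_{1},\omega_{2}$ and $\mathcal{A}$, so they persist verbatim, and $T_{B,A}\x$ contains at most two points and is therefore compact. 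An application of Theorem~\ref{thm KT} then delivers robust $\kl$-stability of $\z^{+}\in T_{B,A}\z$. The only genuine obstacle is the multi-value bookkeeping in the conjugation identity; once that is checked, the rest is essentially a change of variables powered by the $R_{B}$-symmetry of the geometry.
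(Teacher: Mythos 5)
Your proposal is correct and follows essentially the same route as the paper: conjugation of $T_{A,B}$ into $T_{B,A}$ by the involution $R_{B}$, combined with the $R_{B}$-invariance of $V$ (since $\p_{1},\p_{2}\in B$), reducing the decrease condition to Theorem~\ref{main thm}. The only cosmetic difference is that you derive the identity $T_{B,A}=R_{B}T_{A,B}R_{B}$ directly from the definitions (correctly handling the multi-valued branch on $D_{3}$), whereas the paper cites it from the literature.
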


\begin{proof} By \cite[Proposition~2.5~(i) and
  Lemma~2.4~(iii)]{bauschkemoursi2016-on-the-order-of-the-operators-in-the-douglas-rachford-algorithm}
  we have
  \begin{equation} T_{A,B}=R_{B}T_{B,A}R_{B}.\label{eq:18}
  \end{equation} Noting that $R_{B}^{-1}=R_{B}$ and that
  \begin{equation}
    \label{eq:17} V_{i}(R_{B}\x)=V_{i}(\x)
  \end{equation} for all $\x\in\R^{2}$ and $i=1,2$, we only need to
  verify the decrease condition.

  From~\eqref{multi decrease V} we have that
  $$
  \sup_{\y \in T_{A,B}\x}V(\y) \le \gamma V(\x).
  $$
  Let $\w \in T_{B,A}\z$ with $\z=R_{B}\x$. We want to show that
  $V(\w)\leq \gamma V(\z)$.

  To this end note that with $\y=R_{B}\w$, we have
  \begin{align*} V(\w) &\stackrel{\eqref{eq:17}}{=} V(R_{B}\w) =
                         V(\y),\\ \intertext{where clearly $\y\in
    R_{B}T_{B,A}R_{B}\x\stackrel{\eqref{eq:18}}{=}T_{A,B}\x$. Hence we
    continue to estimate} & \stackrel{\eqref{multi decrease V}}{\leq}
                            \gamma V(\x) = \gamma V(R_{B}\z) \stackrel{\eqref{eq:17}}{=} \gamma
                            V(\z).
  \end{align*} This establishes the decrease condition. All other
  estimates are the same as in the theorem.
\end{proof}

Theorem~\ref{main thm} allows us to specify explicitly the convergence
behavior of the Douglas--Rachford difference inclusion~\eqref{eq:1}
even in the presence of perturbations. For example, it is possible to
prove the following robustness result.

\begin{corollary}
  \label{cor:robust-kl-convergence-rate} Under the assumptions of
  Theorem~\ref{main thm}, and with $\eps \in ]0,1[$ such that
  $(1+\eps)^2\gamma < 1$, let
  \begin{align}\label{def sigma}
    \sigma(\x)  = \big((1+\eps)^{\frac{1}{2(1+\alpha)}}-1\big) d(\x,A\cap B).
  \end{align} Then for all $\x\in \R^2$ and $n \in \Z_+$,
  \begin{align*} \sup_{\phi \in
    \S_\sigma(\x,T_{A,B})}d(\phi(\x,n),A\cap B) \le \max\{\|\x-\p_1\|,
    \|\x-\p_2\|\}\left((1+\eps)^2\gamma\right)^{\frac{n}{2\alpha+2}}.
  \end{align*}
\end{corollary}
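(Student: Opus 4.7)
The plan is to show that $V$ from Theorem~\ref{main thm} is an approximate Lyapunov function for the perturbed difference inclusion with contraction rate $(1+\eps)^{2}\gamma$, and then squeeze out the stated bound on $d(\phi(\x,n),A\cap B)$ using the $\varphi_{1},\varphi_{2}$ sandwich from \eqref{cond omegas}. The only nontrivial ingredient to set up is the following estimate: if $\z'\in B[\z,\sigma(\z)]$ for any $\z\in\R^{2}$, then $V(\z')\le (1+\eps)V(\z)$. The point is that by the triangle inequality $\|\z'-\p_{i}\|\le \|\z-\p_{i}\|+\sigma(\z)$, and because $\sigma(\z)=c\,d(\z,A\cap B)=c\,\omega_{1}(\z)\le c\,\|\z-\p_{i}\|$ with $c=(1+\eps)^{1/(2(1+\alpha))}-1$, we obtain $V_{i}(\z')\le (1+c)^{2}V_{i}(\z)$, and therefore $V(\z')=V_{1}(\z')^{\alpha}V_{2}(\z')\le (1+c)^{2\alpha+2}V(\z)=(1+\eps)V(\z)$ by the choice of $c$.

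With this key lemma in hand, I would next unwind the definition of $T_{A,B,\sigma}$. Any $\y'\in T_{A,B,\sigma}\x$ arises as $\y'\in B[\y,\sigma(\y)]$ with $\y\in T_{A,B}\x'$ and $\x'\in B[\x,\sigma(\x)]$. Applying the lemma first to $\x'\in B[\x,\sigma(\x)]$, then the decrease condition \eqref{multi decrease V} of Theorem~\ref{main thm} to $\y\in T_{A,B}\x'$, and finally the lemma again to $\y'\in B[\y,\sigma(\y)]$, I chain the estimates
\begin{equation*}
V(\y')\le (1+\eps)V(\y)\le (1+\eps)\gamma V(\x')\le (1+\eps)^{2}\gamma\, V(\x).
\end{equation*}
Since $(1+\eps)^{2}\gamma<1$ by hypothesis, iterating over $n$ along any $\phi\in\S_{\sigma}(\x,T_{A,B})$ gives $V(\phi(\x,n))\le \bigl((1+\eps)^{2}\gamma\bigr)^{n}V(\x)$.

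For the final step I invoke the sandwich inequality $\varphi_{1}(\omega_{1}(\cdot))\le V(\cdot)\le \varphi_{2}(\omega_{2}(\cdot))$ from Theorem~\ref{main thm} with $\varphi_{1}(r)=\varphi_{2}(r)=r^{2\alpha+2}$. This yields
\begin{equation*}
\omega_{1}(\phi(\x,n))^{2\alpha+2}\le V(\phi(\x,n))\le \bigl((1+\eps)^{2}\gamma\bigr)^{n}\omega_{2}(\x)^{2\alpha+2},
\end{equation*}
and taking the $(2\alpha+2)$-th root together with $\omega_{1}=d(\cdot,A\cap B)$ and $\omega_{2}(\x)=\max\{\|\x-\p_{1}\|,\|\x-\p_{2}\|\}$ produces exactly the asserted bound. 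The estimate is uniform in $\phi\in\S_{\sigma}(\x,T_{A,B})$ since every inequality above holds for all admissible choices of $\x'$ and $\y'$. The only step requiring genuine care is the first one, balancing the specific form of $\sigma(\x)$ against the exponents in $V$; once the exponent $1/(2(1+\alpha))$ in the definition of $\sigma$ is identified as the one that makes $(1+c)^{2\alpha+2}=1+\eps$ work out, the rest is bookkeeping.
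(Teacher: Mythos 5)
Your proposal is correct and follows essentially the same route as the paper's own proof in Appendix~B: the same key lemma that $V$ grows by at most a factor $(1+\eps)$ on the ball $B[\z,\sigma(\z)]$ (exploiting $\sigma(\z)\le c\,\|\z-\p_i\|$ for both $i$ and the exponent bookkeeping $(1+c)^{2\alpha+2}=1+\eps$), the same three-step chaining through the definition of the $\sigma$-perturbation to get $V(\phi(\x,n))\le((1+\eps)^2\gamma)^nV(\x)$, and the same final application of the sandwich inequality~\eqref{cond omegas}. No gaps.
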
 

The proof of Corollary~\ref{cor:robust-kl-convergence-rate} can be found in Appendix~\ref{app:proof-cor}.


\section{Perspectives and Open Problems}
\label{sec:persp-open-probl}

While the Lyapunov approach for studying asymptotic stability has a
long history, its use to study convergence of Douglas--Rachford is
very recent. Two main reasons for its success are that, in essence,
asymptotic stability implies the existence of a Lyapunov function,
and, secondly, that these functions can provide global stability
certificates and, in the non-global case, useful estimates on the
regions of attraction, i.e., the sets of initial conditions from where
the iteration is going to converge. However, several problems are left
open for further investigation.

One question concerning Theorem~\ref{main thm} is to find a Lyapunov
function for a larger region in the $(\theta_1, \theta_2)$-domain,
cf.~Fig.~\ref{subfig:a}, in order to reduce the conservativeness
inherent to the present approach. Notice that in the case where
$\theta_1 \in ]0,\pi/2]$ and $\theta_2 = \pi-\theta_1$, the function
$V\colon \R^2\to \R_+$, defined by
\begin{align}\label{def V min} V(\x) \coloneqq
  \min\{V_1(\x),V_2(\x)\},
\end{align} satisfies $V(T_{A,B}\x) = \cos^2\theta_1V(\x)$. This case
is particularly simple, since we have $T_{A_i,B}D_i \subseteq D_i$ for
$i=1,2$. However, if $V$ is chosen as in~\eqref{def V min} then it
does not satisfy the decay condition~\eqref{multi decrease V} for some
of the choices of $\theta_1$ and $\theta_2$ that satisfy the
assumptions of Theorem~\ref{main thm}.

A natural extension of Theorem~\ref{main thm} concerns the study of
affine subspaces in higher dimensional spaces, along the lines of
\cite{MR3233066}, which could provide a better intuition for
understanding the global convergence of the Douglas--Rachford
iteration in more general scenarios.

Yet another question concerns the study of the parameter regions where
periodic orbits seem to occur. In numerical experiments these periodic
orbits appear to be attracting nearby solutions, so it seems
reasonable to conjecture that for the periodic orbits, too, one can
find suitable (local) Lyapunov functions and then use these to
estimate the corresponding regions of attraction, which are linked to
the success rate of the algorithm (ratio of convergent/nonconvergent
solutions).

As for robustness, we did not try to give optimal bounds in
Corollary~\ref{cor:robust-kl-convergence-rate}, and there may be room
for further improvement.

For us the most interesting question is to construct a Lyapunov
function for the case of a polygon and a line, which opens a pathway
towards considering even more complex geometries like circle and line
or ellipse and lines as limits of polygons, possibly exploiting
robustness properties along the way.  If we consider the case $\H =
\R^2$, then at any given point the Douglas--Rachford operator reflects
either with respect to two lines or with respect to a line and a
point, see Fig.~\ref{figPoly}.  This question requires a better
understanding of the Douglas--Rachford operator in the case of
multiple lines than we currently have.

\begin{figure}[htb!]  \centering
  \includegraphics[width=.65\linewidth]{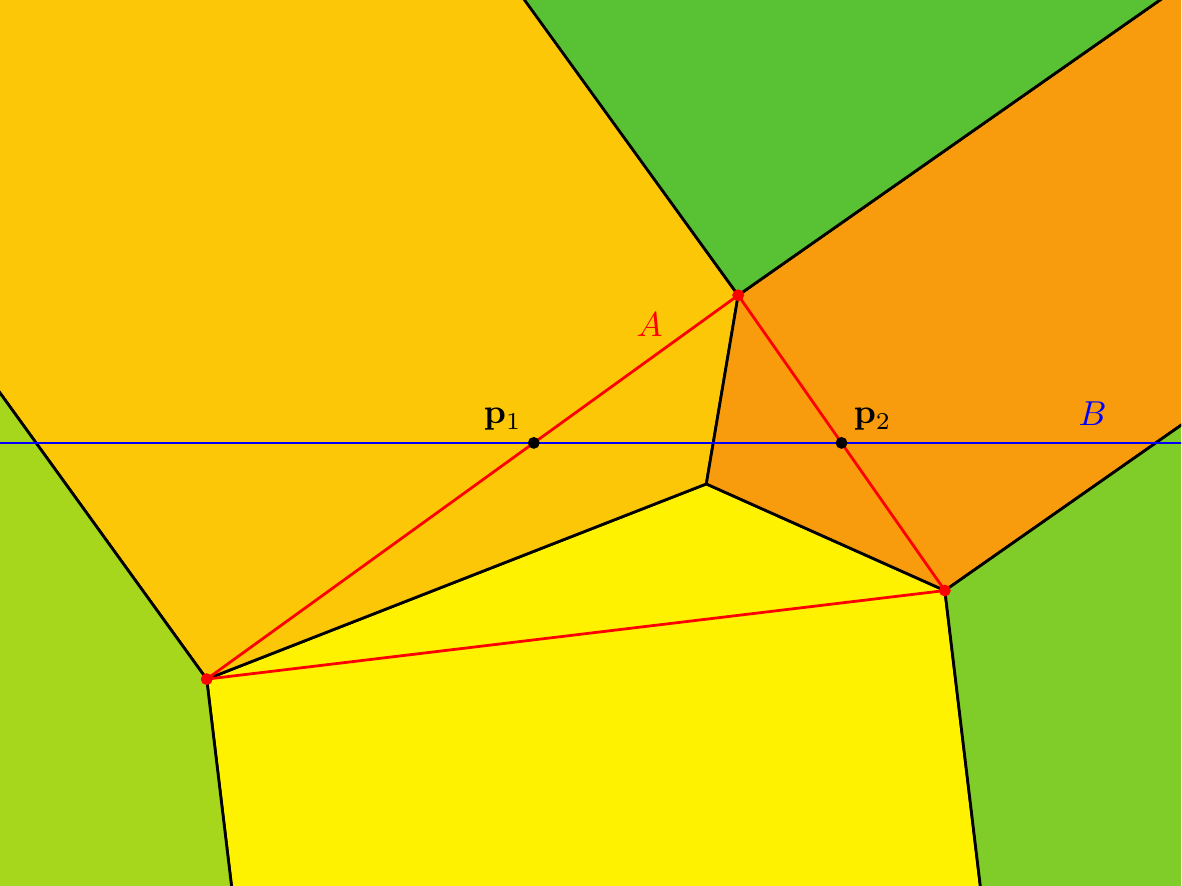}
  \caption{The Douglas--Rachford iteration for a triangle (as a simple
    polygon) and a straight line. At each point we reflect either with
    respect to the blue line and one of the red lines (the yellow-orange
    domains) or with respect to the blue line and one of the red points
    (the green domains). The black lines are where the map is
    multi-valued.}
  \label{figPoly}
\end{figure}

The more general case of intersecting non-convex sets that are
themselves finite unions of convex sets is understood
locally~\cite{MR3227480}. However, a Lyapunov approach could shed
light on the region of attraction and lead to the important insight
what other (other than convex) conditions ensure global convergence or
at least a large region of attraction, which is of interest in
practice.

Lastly, a seemingly simple scenario, that was kindly brought to our
attention by Heinz Bauschke, is the convergence behavior in the case
of two finite sets. In this case projections are very easy to compute,
but the resulting dynamics can be very rich, and essentially nothing
is known about the convergence behavior of the Douglas--Rachford
iteration to date. This problem, too, once understood, could at a
larger scale (many points!) be used to approximate more complex
non-convex cases and provide vital insights to their understanding. To
put this into the words of the late Jon Borwein: ``If there is a
problem you don't understand, there's a smaller problem within it that
you don't understand. So solve that one first.'' The idea here is, of
course, that the simpler problem is easier and its solution provides
crucial insight into the bigger problem.

\section{Conclusions}
\label{sec:conclusions}

This paper presents an explicit construction of a Lyapunov function
for a Douglas--Rachford iteration in a non-convex setting by combining
simple, local Lyapunov functions to a global Lyapunov function. It is
discussed how the existence of a global Lyapunov function demonstrates
not only global converge to one of the intersection points, but also
implies strong stability and robustness properties of the
Douglas--Rachford iteration. Several leads for further research
directions are provided.

\appendix

\section{Proof of Proposition~\ref{prop:thm-a-helper}.}
\label{app:proof-prop}

We begin by establishing condition~\eqref{eq:5}. The proof for
condition~\eqref{eq:6} is essentially the same and thus omitted for
brevity.

\subsubsection*{Establishing~Condition~\eqref{eq:5}} We have by
Proposition~\ref{prop decay V's} that
$$
T_{A_2,B}\x = \p_2 + \cos\theta_{2}\M_{\theta_{2}}\left(\x-\p_2\right)
= \nfrac{1}{2} \e_1 +
\cos\theta_{2}\M_{\theta_{2}}\left(\x-\nfrac{1}{2} \e_1\right).
$$
If $\theta_{2}=\nfrac\pi2$ then this simplifies further to
$T_{A_2,B}\x = \p_2$, resulting in $V_{1}(T_{A_2,B}\x) = 1$. We can
hence deduce that
\begin{align} 1= V_1(T_{A_2,B}\x) > \rho V_1(\x) = \rho
  \|\x-\p_{1}\|^{2}\nonumber\\ \iff \quad \x\in
  B(\p_{1},\frac{\sqrt\rho}{\rho}).\label{eq:12}
\end{align} Now, if $\theta_{2}\ne\nfrac\pi2$ then we have
\begin{multline*} V_1(T_{A_2,B}\x) =$ $\left\|T_{A_2,B}\x -
    \p_1\right\|^2 = \left\| T_{A_2,B}\x + \nfrac{1}{2} \e_1\right\|^2 \\
  = \left\| \e_1 + \cos\theta_2\M_{\theta_{2}}\left(\x-\nfrac{1}{2}
      \e_1\right)\right\|^2 =
  \left\|\cos\theta_{2}\M_{\theta_{2}}\left(\x-\nfrac{1}{2} \e_1 +
      S_{\theta_2}\e_1\right)\right\|^2 \\ = \cos^2\theta_2
  \left\|\x-\nfrac{1}{2} \e_1 +S_{\theta_2}\e_1\right\|^2,
\end{multline*} where $S_{\theta_2} \coloneqq \frac{1}{\cos\theta_2}
\M_{-\theta_{2}}$ satisfies $S_{\theta_2}\e_1 =
\e_1+\tan\theta_2\e_2$, and so $-\nfrac{1}{2} \e_1 + S_{\theta_2}\e_1
= \nfrac{1}{2} \e_1 + \tan\theta_2\e_2$.  The inequality
$V_1(T_{A_2,B}\x) > \rho V_1(\x)$ is thus equivalent to
\begin{align*}
  \cos^2\theta_2\left\|\x+\nfrac{1}{2} \e_1 +\tan\theta_2\e_2\right\|^2
> \rho\left\|\x+\nfrac{1}{2} \e_1\right\|^2.
\end{align*}  
Expanding this gives
\begin{align*}
  \cos^2\theta_2\|\x+\nfrac{1}{2} \e_1\|^2 + 
2\cos^2\theta_2\tan\theta_2\langle \x + \nfrac{1}{2} \e_1,
\e_2\rangle + \cos^2\theta_2\tan^2\theta_2 >
\rho\left\|\x+\nfrac{1}{2} \e_1\right\|^2
\end{align*}
or, equivalently,
\begin{align}\label{bound before complete}
  (\rho-\cos^2\theta_2)\left\|\x+\nfrac{1}{2} \e_1\right\|^2 -
  2\cos^2\theta_2\tan\theta_2\left\langle \x+\nfrac{1}{2} \e_1,
  \e_2\right\rangle < \sin^2\theta_2.
\end{align} By assumption we have $\rho > \cos^2\theta_2$. Hence
estimate~\eqref{bound before complete} is equivalent to
\begin{align*} \left\|\x+\nfrac{1}{2} \e_1\right\|^2 -
  2\ofrac{\cos^2\theta_2\tan\theta_2}{\rho-\cos^2\theta_2}\left\langle
  \x+\nfrac{1}{2} \e_1, \e_2\right\rangle <
  \ofrac{\sin^2\theta_2}{\rho-\cos^2\theta_2}.
\end{align*} Completing the square gives
\begin{align}\label{norm is small} \nonumber \left\|\x+\nfrac{1}{2}
  \e_1 -
  \ofrac{\cos\theta_2\sin\theta_2}{\rho-\cos^2\theta_2}\e_2\right\|^2 &
                                                                        <
                                                                        \ofrac{\sin^2\theta_2}{\rho-\cos^2\theta_2}+\ofrac{\cos^2\theta_2\sin^2\theta_2}{(\rho-\cos^2\theta_2)^2}
  \\ & = \ofrac{\rho \sin^2\theta_2}{(\rho-\cos^2\theta_2)^2}.
\end{align} Since $\theta_2 \in ]0,\pi[$, we have $\sin\theta_2 > 0$
and so \eqref{norm is small} is equivalent to
\begin{align*} \x \in B\left(-\nfrac{1}{2} \e_1 +
  \ofrac{\cos\theta_2\sin\theta_2}{\rho-\cos^2\theta_2}\e_2,
  \ofrac{\sqrt \rho \sin\theta_2}{\rho-\cos^2\theta_2}\,\right).
\end{align*} Since $\p_1 = -\nfrac{1}{2} \e_1$, this
establishes~\eqref{eq:5}, which contains \eqref{eq:12} as a special
case.\qede

\subsubsection*{An Auxiliary Lemma} \let\frac=\ofrac

Before we can proceed with the proof of the proposition, we need the
following auxiliary result, which provides a characterization of the
set $D_3$ defined in~\eqref{def D's}. Note that since $A_1$ and $A_2$
are two non-parallel straight lines, their intersection is a single
point.

\begin{lemma}
  \label{aux-lemma-D3} Let $\c$ be the unique intersection point of
  $A_1$ and $A_2$. Then
  \begin{align}
    \label{for c}
    \c=\left(\frac{\sin(\theta_1+\theta_2)}{2\sin(\theta_2-\theta_1)},\frac{\sin\theta_1\sin\theta_2}{\sin(\theta_2-\theta_1)}\right)
  \end{align} and we have
  \begin{align*} D_3 = \left\{ \x\in \R^2\colon \langle
    \x-\c,\n_1\rangle = 0\right\} \cup \left\{ \x\in \R^2\colon \langle
    \x-\c,\n_2\rangle = 0\right\},
  \end{align*} where $\n_1$, $\n_2$, are given by
  \begin{align}
    \label{for n1} \n_1 & =
                          \left(\cos\left(\frac{\theta_1+\theta_2}{2}\right),\sin\left(\frac{\theta_1+\theta_2}{2}\right)\right),
    \\
    \label{for n2}\n_2 & =
                         \left(\sin\left(\frac{\theta_1+\theta_2}{2}\right),-\cos\left(\frac{\theta_1+\theta_2}{2}\right)\right).
  \end{align}
\end{lemma}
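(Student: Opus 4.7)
The plan is to first compute the intersection point $\c$ by solving a $2\times 2$ linear system, and then characterize $D_{3}$ via the classical fact that the locus of points equidistant from two non-parallel straight lines in the plane is the union of their two perpendicular angle bisectors at the intersection point.

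For the first step, I would parametrize $A_i = \{\p_i + t(\cos\theta_i, \sin\theta_i) : t\in\R\}$, $i=1,2$, with $\p_1 = -\tfrac{1}{2}\e_1$ and $\p_2 = \tfrac{1}{2}\e_1$, and look for $t_1, t_2$ with $\p_1 + t_1(\cos\theta_1,\sin\theta_1) = \p_2 + t_2(\cos\theta_2,\sin\theta_2)$. Equating $y$-components gives $t_2 = t_1\sin\theta_1/\sin\theta_2$, which is well-defined since $\theta_2\in\,]0,\pi[$. Substituting into the $x$-equation and using $\sin\theta_2\cos\theta_1 - \cos\theta_2\sin\theta_1 = \sin(\theta_2-\theta_1) \neq 0$ (because $\theta_1 \neq \theta_2$) yields $t_1 = \sin\theta_2/\sin(\theta_2-\theta_1)$. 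Substituting back and applying the identity $\sin\theta_2\cos\theta_1 + \cos\theta_2\sin\theta_1 = \sin(\theta_1+\theta_2)$ produces the expression for $\c$ in \eqref{for c}.

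For the second step, I would use the representation $d(\x,A_i) = |\langle \x - \c, \v_i^{\perp}\rangle|$, where $\v_i^{\perp}\coloneqq(-\sin\theta_i,\cos\theta_i)$ is a unit normal to $A_i$ (so that $\c\in A_i$ eliminates the need to base-point at $\p_i$). The equation $d(\x,A_1) = d(\x,A_2)$ is then $|\langle \x-\c,\v_1^{\perp}\rangle| = |\langle \x-\c,\v_2^{\perp}\rangle|$, which is equivalent to the union of the two linear equations
\begin{equation*}
\langle \x-\c, \v_1^{\perp}-\v_2^{\perp}\rangle = 0 \qquad \text{or} \qquad \langle \x-\c, \v_1^{\perp}+\v_2^{\perp}\rangle = 0.
\end{equation*}
The sum-to-product identities
\begin{equation*}
\cos\theta_1-\cos\theta_2 = -2\sin\!\tfrac{\theta_1+\theta_2}{2}\sin\!\tfrac{\theta_1-\theta_2}{2}, \quad \sin\theta_1-\sin\theta_2 = 2\cos\!\tfrac{\theta_1+\theta_2}{2}\sin\!\tfrac{\theta_1-\theta_2}{2},
\end{equation*}
and the analogous ones for the sums, then show that $\v_1^{\perp} - \v_2^{\perp}$ is a nonzero scalar multiple of $\n_1$ in~\eqref{for n1} and $\v_1^{\perp} + \v_2^{\perp}$ is a nonzero scalar multiple of $\n_2$ in~\eqref{for n2}, completing the proof.

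All steps are routine trigonometric manipulations. The only mild subtlety is ensuring that the two vectors $\v_1^{\perp}\pm\v_2^{\perp}$ are both nonzero (so that the equations genuinely define lines, not all of $\R^2$); this follows from $\theta_1\ne \theta_2$ and $\theta_1,\theta_2\in \,]0,\pi[$, which exclude $\v_1^{\perp} = \pm \v_2^{\perp}$. No step is expected to pose a genuine obstacle.
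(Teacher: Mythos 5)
Your proposal is correct and follows essentially the same route as the paper: compute $\c$ by intersecting the two lines, then identify $D_3$ as the union of the two angle bisectors through $\c$ with normals $\n_1$, $\n_2$. If anything, your treatment of the second step (deriving the bisector normals from $|\langle \x-\c,\v_1^{\perp}\rangle|=|\langle \x-\c,\v_2^{\perp}\rangle|$ via sum-to-product identities, and checking that $\v_1^{\perp}\pm\v_2^{\perp}$ are nonzero) is more explicit than the paper's, which simply asserts the classical angle-bisector fact.
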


\begin{proof} The line $A_1$ is the collection of all points $\x\in
  \R^2$ that satisfy $\langle \x,\e_1\rangle \sin\theta_1$ $-$
  $\langle\x,\e_2\rangle \cos\theta_1$ $+$ $\nfrac{1}{2} \sin\theta_1 =
  0$.  Similarly, the line $A_2$ is the collection of all points $\x\in
  \R^2$ that satisfy $\langle \x,\e_1\rangle \sin\theta_2 -
  \langle\x,\e_2\rangle \cos\theta_2 - \nfrac{1}{2}\sin\theta_2 = 0$.
  Solving these two equations implies that the intersection point $\c$
  between $A_1$ and $A_2$ is indeed given by~\eqref{for c}.  Now, the
  (normalized) normal vectors to the lines splitting the angles between
  $A_1$ and $A_2$ are given by~\eqref{for n1} and \eqref{for n2} and
  this completes the proof of the auxiliary lemma.
\end{proof} \medskip
\noindent We are now in a position to establish
condition~\eqref{eq:7}.  The proof of condition~\eqref{eq:8} follows
closely that of condition~\eqref{eq:7} and is thus omitted for reasons
of space.

\subsubsection*{Establishing Condition~\eqref{eq:7}}
\def\d{\mathbf{d}}

Since the direction vector of $A_i$ is\linebreak[4] $(\cos\theta_i,
\sin\theta_i)$, $\d_{i}^{\perp}\coloneqq(\sin\theta_i, -\cos\theta_i)$
is a normal vector to $A_{i}$ and the distance of a point $Q$ to
$A_{i}$ is given by $|\langle Q-\p_{i},\d_{i}^{\perp}\rangle|$. We
compute
\begin{align}
  d\left(\p_1+\frac{\cos\theta_2\sin\theta_2}{\rho-\cos^2\theta_2}\e_2,
  A_1\right) & =
               \left|\frac{\cos\theta_1\cos\theta_2\sin\theta_2}{\rho-\cos^2\theta_2}\right|
               \label{eq:13}
\end{align} and
\begin{align}
  d\left(\p_1+\frac{\cos\theta_2\sin\theta_2}{\rho-\cos^2\theta_2}\e_2,
  A_2\right) & =
               \left|\frac{\cos^2\theta_2\sin\theta_2}{\rho-\cos^2\theta_2}+\sin\theta_2\right|.
               \label{eq:14}
\end{align} Now, since $\theta_1 \in ]0,\pi/2]$ and $\theta_2\in
]\theta_{1},\pi[$, we have $|\cos\theta_1\cos\theta_2| < 1$,
$\cos^{2}\theta_{i}<1$, $\sin\theta_2 > 0$, and
$(1+\sin\theta_{1})(1+\sin\theta_{2}) > 1$. Since we assumed that
$\rho \ge (1+\sin\theta_{1})(1+\sin\theta_{2})$, we have
$|\cos\theta_1\cos\theta_2| < 1 < \rho$. With these estimates we can
bound~\eqref{eq:13} generously as
\begin{align}\label{p1 dist to A1}
  d\left(\p_1+\frac{\cos\theta_2\sin\theta_2}{\rho-\cos^2\theta_2}\e_2,
  A_1\right) < \frac{\rho\sin\theta_2}{\rho-\cos^2\theta_2}
\end{align} and simplify~\eqref{eq:14} to
\begin{align}\label{p1 dist to A2}
  d\left(\p_1+\frac{\cos\theta_2\sin\theta_2}{\rho-\cos^2\theta_2}\e_2,
  A_2\right) =
  \sin\theta_2\left(\frac{\cos^2\theta_2}{\rho-\cos^2\theta_2} +
  1\right) = \frac{\rho\sin\theta_2}{\rho-\cos^2\theta_2}.
\end{align} In light of~\eqref{p1 dist to A1} and~\eqref{p1 dist to
  A2}, $A_{1}$ is the closer line to
$\p_1+\frac{\cos\theta_2\sin\theta_2}{\rho-\cos^2\theta_2}\e_2$, so it
follows that
$\p_1+\frac{\cos\theta_2\sin\theta_2}{\rho-\cos^2\theta_2}\e_2 \in
D_1$. Therefore, in order to prove~\eqref{eq:7}, it is enough to show
that
\begin{align}\label{want dist small 1} \frac{\sqrt \rho
  \sin\theta_2}{\rho-\cos^2\theta_2} \leq
  d\left(\p_1+\frac{\cos\theta_2\sin\theta_2}{\rho-\cos^2\theta_2}\e_2,
  D_3\right),
\end{align} that is, we want the radius of the ball to be smaller than
the distance of the center to the boundary of $D_1$ (which is exactly
$D_3$). Now, by the auxiliary Lemma~\ref{aux-lemma-D3} , we have
\begin{equation}\label{p1 dist to D3}
  d\left(\p_1+\frac{\cos\theta_2\sin\theta_2}{\rho-\cos^2\theta_2}\e_2,
    D_3\right) = \min_{i=1,2}\left\{\left|\left\langle
        \c-\p_1-\frac{\cos\theta_2\sin\theta_2}{\rho-\cos^2\theta_2}\e_2,
        \n_i\right\rangle\right|\right\}.
\end{equation} By squaring both sides of~\eqref{want dist small 1} and
using~\eqref{p1 dist to D3} we need to establish that
\begin{alignat}{2} \nonumber \frac{\rho
    \sin^2\theta_2}{(\rho-\cos^2\theta_2)^2} & \leq
  \min_{i=1,2}\Bigg\{&&\left(\left\langle
      \c-\p_1-\frac{\cos\theta_2\sin\theta_2}{\rho-\cos^2\theta_2}\e_2,
      \n_i\right\rangle\right)^{2}\Bigg\}.\\ \intertext{Using~\eqref{for c},
    \eqref{for n1}, \eqref{for n2}, as well as standard trigonometric
    identities, the right hand side simplifies to} & = \min\Bigg\{
  &&\Bigg(\frac{\sin\theta_2}{2\sin\left(\frac{\theta_2-\theta_1}{2}\right)}
  -
  \frac{\cos\theta_2\sin\theta_2}{\rho-\cos^2\theta_2}\sin\left(\frac{\theta_1+\theta_2}{2}\right)\Bigg)^{2}, \label{eq:20}\\
  &&&
  \Bigg(\frac{\sin\theta_2}{2\cos\left(\frac{\theta_2-\theta_1}{2}\right)}+\frac{\cos\theta_2\sin\theta_2}{\rho-\cos^2\theta_2}\cos\left(\frac{\theta_1+\theta_2}{2}\right)\Bigg)^{2}\,\Bigg\},\label{eq:21}
\end{alignat} so that we need to verify two inequalities, both of
which can be simplified further. Starting with~\eqref{eq:20}, we take
a common denominator and extract common factors. Using that
$0<\theta_{2}<\pi$, $\sin^{2}\theta_{2}>0$, and
$\rho>\cos^{2}\theta_{2}$, as well as trusty trigonometric identities,
we simplify~\eqref{eq:20} to
\begin{equation}
  \label{eq:22} \rho^2 - (2 - 2\sin\theta_1\sin\theta_2)\rho +
  \cos^2\theta_1\cos^2\theta_2 \geq 0.
\end{equation} A similar argument can be made for~\eqref{eq:21}, which
simplifies to
\begin{equation}
  \label{eq:24} \rho^2 - (2 + 2\sin\theta_1\sin\theta_2)\rho +
  \cos^2\theta_1\cos^2\theta_2 \geq 0.
\end{equation} For $\rho> 0$, the left hand side of \eqref{eq:22} is
greater than the left hand side of~\eqref{eq:24}.  So it is sufficient
to verify that \eqref{eq:24} holds.

The roots of the quadratic polynomial in $\rho$ on the left hand side
of~\eqref{eq:24} are $\rho_{1,2} = 1+\sin\theta_1\sin\theta_2 \pm
(\sin\theta_1 + \sin\theta_2)$, so that~\eqref{eq:24} holds whenever
$\rho$ is larger or equal to the larger of the two roots, i.e.,
\begin{equation*} \rho \geq 1 + \sin\theta_1\sin\theta_2 +
  \sin\theta_1 + \sin\theta_2 = (1+\sin\theta_1)(1+\sin\theta_2),
\end{equation*} which establishes~\eqref{eq:7}.\qede

\medskip
\noindent This completes the proof of the proposition.

\section{Proof of Corollary~\ref{cor:robust-kl-convergence-rate}.}
\label{app:proof-cor}
We begin with the following lemma.

\begin{lemma}\label{prop sigma}
Let $V\colon \R^2\to \R_+$ be defined as in~\eqref{def V}. Let $\eps \in (0,1)$, and define $\sigma\colon \R_2\to \R_+$,
\begin{align}\label{def sigma}
\sigma(\x)  = \big((1+\eps)^{\frac{1}{2(1+\alpha)}}-1\big) d(\x,A\cap B).
\end{align}
Then for every $\x\in \R^2$,
\begin{align*}%
\sup_{\z \in B[\x,\sigma(\x)]}V(\z) \le (1+\eps)V(\x).
\end{align*}
\end{lemma}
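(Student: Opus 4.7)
The plan is to prove the bound by applying the triangle inequality twice, once for each of the factors $V_1$ and $V_2$ making up $V$, and then multiply the two estimates.

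First I would set $\delta \coloneqq (1+\eps)^{1/(2(1+\alpha))}-1$, so that $\sigma(\x) = \delta\, d(\x,A\cap B)$ and, by the choice of exponent, $(1+\delta)^{2(1+\alpha)} = 1+\eps$. The crucial observation is that since $A\cap B = \{\p_1,\p_2\}$, we have
\begin{equation*}
d(\x,A\cap B) = \min\bigl\{\|\x-\p_1\|,\|\x-\p_2\|\bigr\} \le \|\x-\p_i\|\quad\text{for }i=1,2.
\end{equation*}
Consequently $\sigma(\x) \le \delta\|\x-\p_i\|$ simultaneously for both $i$.

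Next, fix any $\z \in B[\x,\sigma(\x)]$. By the triangle inequality,
\begin{equation*}
\|\z-\p_i\| \le \|\x-\p_i\| + \sigma(\x) \le (1+\delta)\|\x-\p_i\|,
\end{equation*}
so squaring yields $V_i(\z) \le (1+\delta)^2 V_i(\x)$ for $i=1,2$. Using the definition $V = V_1^\alpha V_2$, I would then multiply these estimates, exponentiating the first by $\alpha$, to obtain
\begin{equation*}
V(\z) = V_1(\z)^\alpha V_2(\z) \le (1+\delta)^{2\alpha}(1+\delta)^{2}\, V_1(\x)^\alpha V_2(\x) = (1+\delta)^{2(1+\alpha)} V(\x) = (1+\eps)V(\x).
\end{equation*}
Taking the supremum over $\z \in B[\x,\sigma(\x)]$ finishes the proof.

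There is no real obstacle here; the only subtlety is the correct choice of the exponent in $\delta$ so that the factors $(1+\delta)^{2\alpha}$ from $V_1^\alpha$ and $(1+\delta)^2$ from $V_2$ combine exactly into $1+\eps$, and recognizing that the minimum appearing in $d(\x,A\cap B)$ lets one bound $\sigma(\x)$ by a fixed fraction of \emph{both} $\|\x-\p_1\|$ and $\|\x-\p_2\|$ at the same time.
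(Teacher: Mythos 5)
Your proof is correct and is essentially identical to the paper's own argument: both use the fact that $d(\x,A\cap B)=\min\{\|\x-\p_1\|,\|\x-\p_2\|\}$ bounds $\sigma(\x)$ by $\delta\|\x-\p_i\|$ for each $i$, apply the triangle inequality to get $\|\z-\p_i\|\le(1+\eps)^{1/(2(1+\alpha))}\|\x-\p_i\|$, and then combine the factors via the definition of $V$. The only difference is your cosmetic introduction of the symbol $\delta$.
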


\begin{proof}
Let $\z\in B[\x,\sigma(\x)]$. We have
\begin{align}\label{1st bound rad}
\nonumber \|\x-\z\| & \le \left((1+\eps)^{\frac{1}{2(1+\alpha)}}-1\right)d(\x,A\cap B)
\\
& = \left((1+\eps)^{\frac{1}{2(1+\alpha)}}-1\right) \min\big\{\|\x-\p_1\|, \|\x-\p_2\|\big\},
\end{align}
and so
\begin{eqnarray}\label{z close to p1}
\nonumber \|\z-\p_1\| & \le & \|\z-\x\| + \|\x-\p_1\|
\\
\nonumber & \stackrel{\eqref{1st bound rad}}{\le} & \left((1+\eps)^{\frac{1}{2(1+\alpha)}}-1\right)\|\x-\p_1\| + \|\x-\p_1\|
\\
& = & (1+\eps)^{\frac{1}{2(1+\alpha)}}\|\x-\p_1\|,
\end{eqnarray}
and similarly,
\begin{align}\label{z close to p2}
\|\z-\p_2\| \le (1+\eps)^{\frac{1}{2(1+\alpha)}}\|\x-\p_2\|.
\end{align}
Hence,
\begin{eqnarray*}
V(\z) & = & V_1^\alpha(\z)V_2(\z) = \|\z-\p_1\|^{2\alpha} \|\z-\p_2\|^2 
\\ 
& \stackrel{\eqref{z close to p1}\wedge \eqref{z close to p2}}{\le} & (1+\eps)^{\frac{2\alpha+2}{2(\alpha+1)}}\|\x-\p_1\|^{2\alpha} \|\x-\p_2\|^2
\\
& = & (1+\eps)V(\x).
\end{eqnarray*}
Since $\z\in B[\x,\sigma(\x)]$ is arbitrary, the result follows.
\end{proof}

\begin{proof}[Proof of Corollary~\ref{cor:robust-kl-convergence-rate}]
Let $\x \in \R^2$. By the definition of the $\sigma$-perturbation, we have
\begin{align*}
\z \in (T_{A,B})_\sigma(\x) \iff \z \in \bigcup_{\y \in T_{A,B}(B[\x,\sigma(\x)])}B[\y,\sigma(\y)].
\end{align*}
Therefore, we have
\begin{eqnarray}\label{upper bound V}
\nonumber \sup_{\substack{\z \in (T_{A,B})_\sigma(\x)}}V(\z) & = & \sup_{\substack{\z\in B[\y,\sigma(\y)] \\ \y \in T_{A,B}(B[\x,\sigma(\x)])}}V(\z)
\\
\nonumber & \stackrel{(\clubsuit)}{\le} & (1+\eps)\sup_{\substack{\y  \in T_{A,B}(B[\x,\sigma(\x)])}}V(\y)
\\
\nonumber & \stackrel{(\spadesuit)}{\le} & (1+\eps)\gamma \sup_{\y \in B[\x,\sigma(\x)]}V(\y) 
\\
& \stackrel{(\clubsuit)}{\le} & (1+\eps)^2\gamma V(\x),
\end{eqnarray}
where in ($\clubsuit$) we used Lemma~\ref{prop sigma} and in ($\spadesuit$) we used~\eqref{multi decrease V} in Theorem~\ref{main thm}. 
Let $\x \in \R^2$ and $\phi \in \S_\sigma(\x,T_{A,B})$. Then for all $n \in \Z_+$, we have by~\eqref{upper bound V},
\begin{align}\label{upper bound iter}
V(\phi(\x,n)) \le \left((1+\eps)^2\gamma\right)^nV(\x).
\end{align}

By Theorem~\ref{main thm}, $V$ is a Lyapunov function, and in particular satisfies condition~\eqref{cond omegas}. Therefore, for $\phi \in \S_\sigma(\x,T_{A,B})$,
\begin{eqnarray*}
d(\phi(\x,n),A\cap B)^{2\alpha+2} & \stackrel{\eqref{eq:9}\wedge \eqref{eq:11}}{=} & \varphi_1(\omega_1(\phi(\x,n)))
\\ & \stackrel{\eqref{cond omegas}}{\le} & V(\phi(\x,n)) 
\\ & \stackrel{\eqref{upper bound iter}}{\le} & \left((1+\eps)^2\gamma\right)^nV(\x) 
\\ & \stackrel{\eqref{cond omegas}}{\le} & \left((1+\eps)^2\gamma\right)^n \max\{\|\x-\p_1\|, \|\x-\p_2\|\}^{2\alpha+2}.
\end{eqnarray*}
Altogether, for all $\x\in \R^2$ and $n \in \Z_+$, 
\begin{align*}
\sup_{\phi \in \S_\sigma(\x,T_{A,B})}d(\phi(\x,n),A\cap B) \le 
\max\{\|\x-\p_1\|, \|\x-\p_2\|\}\left((1+\eps)^2\gamma\right)^{\frac{n}{2\alpha+2}}.
\end{align*}
It is trivial to show that the function $\beta\colon \R_+\times \R_+ \to \R_+$ defined by 
\begin{align*}
\beta(s,t) = s\left((1+\eps)^2\gamma\right)^{\frac{t}{2\alpha+2}}
\end{align*}
is a $\kl$-class function. The proof is therefore complete.
\end{proof}

\subsubsection*{Acknowledgements}
\label{sec:acknowledgements}

Ohad Giladi has been supported by ARC grant DP160101537.  Björn S.\
Rüffer has been supported by ARC grant DP160102138.  Both authors
would like to thank the anonymous referees for their helpful comments.

\bibliographystyle{abbrv}
\bibliography{dr-three-lines}

\end{document}